\theoremstyle{plain}
\newtheorem{theorem}{Theorem}[section]
\newtheorem{lemma}[theorem]{Lemma}
\newtheorem{corollary}[theorem]{Corollary}
\newcommand{\supp}{\mathop{\mathrm{supp}}\nolimits}
\theoremstyle{definition} 
\newtheorem{definition}[theorem]{Definition}
\numberwithin{equation}{section}
\theoremstyle{remark}
\def\Xint#1{\mathchoice
{\XXint\displaystyle\textstyle{#1}}%
{\XXint\textstyle\scriptstyle{#1}}%
{\XXint\scriptstyle\scriptscriptstyle{#1}}%
{\XXint\scriptscriptstyle\scriptscriptstyle{#1}}%
\!\int}
\def\XXint#1#2#3{{\setbox0=\hbox{$#1{#2#3}{\int}$}
\vcenter{\hbox{$#2#3$}}\kern-.5\wd0}}
\def\dashint{\Xint-}
\title[Characterization of parabolic Hardy spaces]
{Characterization of parabolic Hardy spaces by 
Littlewood-Paley functions}  
\author{Shuichi Sato} 
\begin{document} 

\address{Department of Mathematics,
Faculty of Education, Kanazawa University, Kanazawa 920-1192, Japan}
\email{shuichi@kenroku.kanazawa-u.ac.jp}
\begin{abstract} 
We consider Littlewood-Paley functions associated with non-isotropic dilations.   We prove that they can be used to 
characterize the parabolic Hardy spaces of Calder\'{o}n-Torchinsky. 
\end{abstract}
  \thanks{2010 {\it Mathematics Subject Classification.\/}
  Primary  42B25; Secondary 42B30.
  \endgraf
  {\it Key Words and Phrases.} Littlewood-Paley functions,
 parabolic Hardy spaces.  }
\thanks{The author is partly supported
by Grant-in-Aid for Scientific Research (C) No. 25400130, Japan Society for the  Promotion of Science.}

\maketitle 

\section{Introduction}  
Let $P$ be an $n\times n$ real matrix such that 
$$\langle Px,x\rangle \geq \langle x,x\rangle \quad \text{for all $x\in 
\Bbb R^n$, }
$$  
where $\langle x,y\rangle=x_1y_1+\dots + x_ny_n$ is the inner product in 
$\Bbb R^n$. 
Let $\gamma=\text{{\rm trace} $P$}$. Define a dilation group 
$\{A_t\}_{t>0}$ on $\Bbb R^n$ by $A_t=t^P=\exp((\log t) P)$. 
It is known that $|A_tx|$ is strictly increasing as a function of $t$ on 
$\Bbb R_+=(0,\infty)$ 
for $x\neq 0$, where $|x|=\langle x,x\rangle^{1/2}$.  
Define a norm function $\rho(x)$ to be the unique positive real 
number $t$ such that $|A_{t^{-1}}x|=1$ when $x\neq 0$ and  $\rho(0)=0$.  
Then $\rho(A_tx)=t\rho(x)$, $t>0$, $x\in \Bbb R^n$, 
$\rho \in C^\infty(\Bbb R^n\setminus\{0\})$ 
 and the following results are known 
(see \cite{CT, Ca}): 
\begin{enumerate} 
\item[(P.1)]  $\rho(x+y)\leq \rho(x)+\rho(y)$; 
\item[(P.2)]  $\rho(x)\leq 1$ if and only if $|x|\leq 1$; 
\item[(P.3)]   $|x|\leq \rho(x)$ if $|x|\leq 1$; 
\item[(P.4)]   $|x|\geq \rho(x)$ if $|x|\geq 1$; 
\item[(P.5)]   $|A_tx|\geq t|x|$ if $t\geq 1$ for all $x\in \Bbb R^n$; 
\item[(P.6)] $|A_tx|\leq t|x|$ if $0<t\leq 1$ for all $x\in \Bbb R^n$.   
\end{enumerate} 
 Similarly, we can consider a norm function $\rho^*(x)$ associated with 
 the dilation group $ \{A_t^*\}_{t>0}$, where $A_t^*$ denotes the adjoint 
 of $A_t$.  We have properties analogous to those for $\rho(x)$, $A_t$ above.  
\par 
Let 
\begin{equation}\label{lpop}
g_{\varphi}(f)(x) = \left( \int_0^{\infty}|f*\varphi_t(x)|^2
\,\frac{dt}{t} \right)^{1/2}   
\end{equation} 
be the Littlewood-Paley function on $\Bbb R^n$, 
where $\varphi_t(x)=t^{-\gamma}\varphi(A_t^{-1}x)$ and   
 $\varphi$ is a function in  $L^1(\Bbb R^n)$  such that  
\begin{equation}\label{cancell}
\int_{\Bbb R^n} \varphi (x)\,dx = 0.  
\end{equation} 
For $L^p$ boundedness of $g_{\varphi}$, $1<p<\infty$, see \cite{BCP, Sa2}. 
\par  
We say that a tempered distribution $f$ belongs to the parabolic
$H^p$ if 
$\|f\|_{H^p}=\|f^*\|_p<\infty$ , where $f^*(x)=\sup_{t>0}|\Phi_t*f(x)|$ 
and $\|f^*\|_p=\|f^*\|_{L^p}$, 
with $\Phi\in \mathscr S(\Bbb R^n)$ satisfying $\int \Phi(x)\, dx=1$, 
$\supp(\Phi)\subset \{|x|\leq 1\}$    
(see \cite{CT, CT2},  \cite{FeS2}). We have denoted by $\mathscr S(\Bbb R^n)$ 
the Schwartz class of rapidly decreasing smooth functions on $\Bbb R^n$. 
\par  
Let $\varphi \in L^1(\Bbb R^n)$. We consider the non-degeneracy condition: 
\begin{equation}\label{nondege1}  
\sup_{t>0}|\hat{\varphi}(A_t^*\xi)|>0 \quad \text{for all $\xi\neq 0$, }
\end{equation} 
where the Fourier transform is defined as 
$$
\hat{f}(\xi)=\mathscr F(f)(\xi)=\int_{\Bbb R^n} f(x)e^{-2\pi i\langle x,\xi
\rangle}\, dx. 
$$  
In this note we shall prove the following. 
\begin{theorem}\label{T1.1}  
Let $\varphi$ be a function in $\mathscr S(\Bbb R^n)$ satisfying 
\eqref{cancell} and 
\eqref{nondege1}.  Let $0<p\leq 1$. Then if $f\in H^p$, we have 
\begin{equation*}  
c_1\|f\|_{H^p}\leq \|g_{\varphi}(f)\|_p\leq c_2\|f\|_{H^p} 
\end{equation*} 
with some positive constants $c_1, c_2$ independent of $f$.  
\end{theorem}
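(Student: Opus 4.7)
The theorem splits naturally into the two inequalities, and I would treat them by standard techniques adapted to the parabolic setting. The upper bound $\|g_\varphi(f)\|_p\le c_2\|f\|_{H^p}$ is handled by the atomic decomposition of parabolic $H^p$, while the lower bound $c_1\|f\|_{H^p}\le \|g_\varphi(f)\|_p$ relies on a Calder\'on-type reproducing formula built from the non-degeneracy hypothesis \eqref{nondege1}, combined with a Peetre-type maximal function argument.

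\textbf{Upper bound.} By the Calder\'on--Torchinsky atomic theory I decompose $f=\sum_k\lambda_k a_k$ with $\sum_k|\lambda_k|^p\lesssim \|f\|_{H^p}^p$, each $a_k$ being a $(p,2,N)$-atom associated with a $\rho$-ball $B_k=\{x:\rho(x-x_k)<r_k\}$ and $N$ large enough (depending on $p$ and $\gamma$). It then suffices to show that $\|g_\varphi(a)\|_p^p$ is bounded uniformly in atoms $a$ supported on $B=\{x:\rho(x-x_0)<r\}$. One splits the integral at a dilate $B^*=\{\rho(x-x_0)<2r\}$: on $B^*$ one uses H\"older's inequality together with the $L^2$-bound $\|g_\varphi(a)\|_2\lesssim \|a\|_2$ (itself an immediate consequence of Plancherel once one verifies $\int_0^\infty|\hat\varphi(A_t^*\xi)|^2\,dt/t\le C$ using \eqref{cancell} and the Schwartz decay); on the complement of $B^*$ one Taylor-expands $\varphi_t(x-y)$ about $y=x_0$ up to order $N$, uses the vanishing moments of $a$, and combines the Schwartz decay of $\varphi$ with the distortion estimates (P.5)--(P.6) of the non-isotropic dilations to obtain an integrable tail.

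\textbf{Lower bound.} From \eqref{nondege1} and $\hat\varphi(0)=0$, a standard construction (see \cite{Ca}) produces $\psi\in\mathscr S(\mathbb R^n)$ with $\int\psi=0$ and
\begin{equation*}
\int_0^\infty \hat\varphi(A_t^*\xi)\,\hat\psi(A_t^*\xi)\,\frac{dt}{t}=1,\qquad \xi\ne 0,
\end{equation*}
giving the reproducing formula $f=\int_0^\infty (f*\varphi_t)*\psi_t\,dt/t$ in $\mathscr S'/\mathscr P$. For a fixed $\Phi$ as in the definition of $H^p$, I would substitute this into $\Phi_s*f$ to write
\begin{equation*}
\Phi_s*f(x)=\int_0^\infty\!\!\int (f*\varphi_t)(y)(\psi_t*\Phi_s)(x-y)\,dy\,\frac{dt}{t},
\end{equation*}
and exploit the decay $|\Phi_s*\psi_t(z)|\lesssim \min(s/t,t/s)^{M}\,(\max(s,t))^{-\gamma}(1+\rho(z/\max(s,t)))^{-K}$ (which follows from $\int\psi=\int\Phi\varphi$-cancellation and Schwartz smoothness, split according to $t\le s$ or $t\ge s$) to dominate $f^*(x)$ pointwise by an integral of the Peetre-type maximal function $\varphi_t^{**}f(x)=\sup_y|f*\varphi_t(y)|(1+\rho((x-y)/t))^{-K}$. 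Finally, a Fefferman--Stein vector-valued maximal inequality applied in $L^{p/\eta}(\ell^2(\mathbb R_+,dt/t))$ with $\eta<p$ small yields $\|\varphi_t^{**}f\|_{L^p(L^2(dt/t))}\lesssim \|g_\varphi(f)\|_p$, whence $\|f^*\|_p\lesssim \|g_\varphi(f)\|_p$.

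\textbf{Main obstacle.} The principal difficulty is the lower bound. Three technical points need care: (i) the construction of $\psi$ and convergence of the Calder\'on formula in a topology strong enough for $H^p$ estimates (rather than merely in $\mathscr S'$); (ii) adapting all decay estimates and the Peetre maximal function to the non-isotropic metric $\rho$ and the group $\{A_t\}$, where properties (P.1) and (P.3)--(P.6) have to be used repeatedly to translate between euclidean and $\rho$-balls; (iii) the choice of the auxiliary exponent $\eta<p$ in the Fefferman--Stein step, which must be small enough that the decay of $\varphi_t^{**}f$ absorbs the factor $(1+\rho)^{K}$ in the Peetre-type inequality.
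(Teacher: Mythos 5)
Your upper bound is essentially the paper's argument (atomic decomposition of parabolic $H^p$, an $L^2$/Plancherel estimate on a dilate of the supporting ball, Taylor expansion against the vanishing moments off it), so that half is fine. The lower bound, however, has a genuine gap at the kernel estimate. The claimed decay $|\Phi_s*\psi_t(z)|\lesssim\min(s/t,t/s)^M(\max(s,t))^{-\gamma}(1+\rho(A_{\max(s,t)}^{-1}z))^{-K}$ is false in the regime $t\gg s$: since $\int\Phi=1$ there is no cancellation to play against the smoothness of $\psi_t$, and indeed $\Phi_s*\psi_t\to\psi_t\neq0$ as $s/t\to0$ (on the Fourier side, $\hat\Phi(A_s^*\xi)\approx1$ on the support of $\hat\psi(A_t^*\xi)$, which lies in $\{\rho^*(\xi)\sim1/t\}\subset\{\rho^*(\xi)\lesssim 1/s\}$). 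All one gets for $t\geq s$ is uniform boundedness. Consequently your pointwise bound for $f^*(x)$ degenerates into $\sup_s\int_s^\infty\varphi_t^{**}f(x)\,dt/t$, an $L^1(dt/t)$ quantity which is \emph{not} dominated by the $L^2(dt/t)$ quantity $g_\varphi(f)(x)$ (Cauchy--Schwarz requires an integrable weight in $t$, which is exactly what is missing). This is the classical obstruction to proving $\|f\|_{H^p}\lesssim\|g_\varphi(f)\|_p$ by a pointwise estimate, and it is why the paper never breaks the quadratic structure: setting $F(y,t)=f*\psi_t(y)$, it proves that the synthesis operator $E_\psi(h)=\int_0^\infty\int\psi_t(\cdot-y)h(y,t)\,dy\,dt/t$ maps the vector-valued Hardy space $H^p_{\mathscr H}$ into $H^p$ by an atomic decomposition of $H^p_{\mathscr H}$ (Lemmas \ref{L3.3} and \ref{L3.4}), for which only the uniform $\mathscr S$-boundedness of $\Phi_{s/t}*\psi$ is needed; then $\|F\|_{H^p_{\mathscr H}}$ is controlled by moving $\sup_s$ inside the $dt/t$-integral and using the Peetre maximal function (Lemma \ref{L3.5}).

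A second, subtler gap: you invoke the Peetre inequality $\varphi_t^{**}f\lesssim M(|f*\varphi_t|^r)^{1/r}$ for the given $\varphi$, which is a general Schwartz function with $\hat\varphi(0)=0$ but is not band-limited; that pointwise-in-$t$ inequality is only available when $\hat\varphi$ is supported in an annulus. For general $\varphi$ satisfying \eqref{cancell} and \eqref{nondege1} the paper proves only an integrated substitute (Lemma \ref{L2.4}, whose proof needs the discrete resolution of the identity from Lemma \ref{L2.1}, the convolution decomposition of Lemma \ref{L2.2}, and an absorption argument in a small parameter $\delta$), and it transfers from a band-limited $\eta$ to the general $\varphi$ via the vector-valued comparison of Theorem \ref{T2.3} before concluding as in Corollary \ref{C3.1}. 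Your construction of the reproducing kernel $\psi$ is salvageable provided you force $\hat\psi$ to be supported in an annulus (otherwise $\hat\psi=\overline{\hat\varphi}\,\bigl(\int_0^\infty|\hat\varphi(A_u^*\cdot)|^2\,du/u\bigr)^{-1}$ need not be smooth at the origin, hence $\psi\notin\mathscr S$), and your convergence issue (i) is what the paper handles by working on the dense class $H^p\cap\mathscr S$ with truncations in $t$; but as written the lower-bound argument does not close.
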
  
\par 
We recall some related results when $P=E$ (the identity matrix) and 
$g_\varphi$  is defined by $\varphi_t(x)=t^{-n}\varphi(t^{-1}x)$ in 
\eqref{lpop}. 
Then Theorem \ref{T1.1} is known (see \cite{U} and also \cite{G} for some 
background materials). 
Let 
$Q(x)= [(\partial/\partial t) P(x,t)]_{t=1}$, 
where 
$$P(x,t)=c_n \frac{t}{(|x|^2+t^2)^{(n+1)/2}}, 
\quad c_n=\frac{\Gamma((n+1)/2)}{\pi^{(n+1)/2}}, $$ 
is the Poisson kernel associated with the upper half space 
$\Bbb R^n \times (0, \infty)$ (see \cite[Chap. I]{SW}). 
We note that $\hat{Q}(\xi)=-2\pi|\xi|e^{-2\pi|\xi|}$.  
Then it is also known that 
\begin{equation}\label{hpequiv}
c_1\|f\|_{H^p}\leq \|g_Q(f)\|_{p}\leq c_2\|f\|_{H^p} 
\end{equation}   
for $f\in H^p(\Bbb R^n)$, $0<p<\infty$, 
with positive constants $c_1, c_2$ (see \cite{FeS2} and also \cite{U}).   
In \cite[Chap. 7]{FoS} we can find a relation between Hardy spaces and 
Littlewood-Paley functions associated with the heat kernel on homogeneous 
groups. 
\par 
 Uchiyama \cite{U} gave a proof of the first inequality of \eqref{hpequiv} 
for $0<p\leq 1$ by methods of real analysis without the use of special 
properties of the Poisson kernel such as  harmonicity, a semigroup property.  
Applying a similar argument, \cite{U} proved the first inequality of 
the conclusion of Theorem \ref{T1.1} (when $P=E$) for $0<p\leq 1$: 
\begin{equation}\label{fineq} 
\|f\|_{H^p}\leq c\|g_\varphi(f)\|_{p}. 
\end{equation}  
\par 
For a function $F$ on $\Bbb R^n$ and positive real numbers $N, R$,
define the Peetre maximal function $F^{**}_{N,R}$ by 
\begin{equation*}\label{pmax}
F^{**}_{N,R}(x)=\sup_{y\in \Bbb R^n}\frac{|F(x-y)|}{(1+R|y|)^N}     
\end{equation*} 
(see \cite{P}).  
Then, it is known that the maximal function $F^{**}_{N,R}$ 
can be used along with well-known arguments to prove \eqref{fineq} 
for $0<p\leq 1$ when 
$\varphi \in \mathscr S(\Bbb R^n)$ satisfies a non-degeneracy condition 
and  the condition $\supp(\hat{\varphi}) 
\subset \{a_1\leq |\xi|\leq a_2\}$ for $a_1, a_2>0$.   
\par 
In \cite{Sa3}, \eqref{fineq} was proved for $f$ in a dense subspace of 
$H^p(\Bbb R^n)$ and  for $\varphi$ in a class of functions  
including $Q$ and a general $\varphi \in  \mathscr S(\Bbb R^n)$, 
without the restriction on $\supp(\hat{\varphi})$ above, 
with \eqref{cancell} and \eqref{nondege1}   
by applying  a vector valued inequality 
related to the Littlewood-Paley theory.  
The proof of the vector valued inequality is based on an application of 
the maximal function $F^{**}_{N,R}$.  
This proof of \eqref{fineq}   was discovered by \cite{Sa3}.  
\par 
The purpose of this note is to generalize the methods of \cite{Sa3} to the case
 of the parabolic Hardy spaces and get the characterization of the parabolic 
 Hardy spaces  in terms of Littlewood-Paley functions (Theorem \ref{T1.1}). 
 \par 
In Section 2, 
we shall prove an analogue of the vector valued inequality in \cite{Sa3} for 
the general dilation group $\{A_t\}$ (Theorem \ref{T2.3}). 
We shall consider $g_\varphi$ for $\varphi$ in a class of functions which 
includes those $\varphi$ considered in Theorem \ref{T1.1} and prove 
\eqref{fineq} in the case of the parabolic $H^p$ for such $\varphi$ and for $f$ in a dense subspace of $H^p$ as an application of Theorem \ref{T2.3} 
(Corollary \ref{C3.1}).  Theorem \ref{T2.3} will be 
stated more generally than needed for the proof of 
Corollary \ref{C3.1} as weighted vector valued inequalities. 
\par 
In Section 3, a proof of Corollary \ref{C3.1} will be given by applying 
Theorem \ref{T2.3} and an atomic decomposition for Hardy spaces. 
Also, Theorem \ref{T1.1} will be derived from Corollary \ref{C3.1}.

\section{Weighted vector valued inequalities with non-isotropic dilations}  
We need a partition of unity on $\Bbb R^n \setminus\{0\}$ associated with 
$\varphi$ satisfying \eqref{nondege1}. 

\begin{lemma}\label{L2.1}  
Suppose that $\varphi$ is a function in 
$L^1(\Bbb R^n)$ satisfying  \eqref{nondege1}. We assume that $\hat{\varphi} 
\in C^\infty(\Bbb R^n\setminus\{0\})$.   Then, there 
exist $b_0\in (0,1)$ and $r_1, r_2>0$, $r_1<r_2$, such that for any 
$b\in [b_0, 1)$ there exists a function $\eta$ with the 
following properties$:$ 
\begin{enumerate}
\item[(1)] $\eta\in C^\infty(\Bbb R^n); $  
\item[(2)] $\hat{\eta}\in C^\infty(\Bbb R^n)$ and 
$\supp \hat{\eta} \subset \{r_1<\rho^*(\xi)<r_2\};$   
\item[(3)]
$\sum_{j=-\infty}^\infty \hat{\varphi}(A_{b^j}^*\xi)
\hat{\eta}(A_{b^j}^*\xi) =1$ \quad for $\xi\in \Bbb R^n\setminus\{0\}$. 
\end{enumerate}  
\end{lemma}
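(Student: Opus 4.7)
The plan is to set up a Calder\'on-type reproducing formula. I will pick a smooth nonnegative function $\zeta$ on $\Bbb R^n$, depending only on $\rho^*(\xi)$, with $\supp\zeta\subset\{r_1<\rho^*(\xi)<r_2\}$, form the $A_b^*$-invariant sum
$$
S(\xi)=\sum_{j\in\Bbb Z}|\hat\varphi(A_{b^j}^*\xi)|^2\zeta(A_{b^j}^*\xi),
$$
and set $\hat\eta(\xi)=\overline{\hat\varphi(\xi)}\,\zeta(\xi)/S(\xi)$, extended by $0$ where $\zeta$ vanishes. Once I verify that $S\ge c>0$ on $\Bbb R^n\setminus\{0\}$ and $S\in C^\infty(\Bbb R^n\setminus\{0\})$, the function $\hat\eta$ is smooth on all of $\Bbb R^n$ (vanishing near the origin and compactly supported in $\{r_1<\rho^*<r_2\}$), so $\eta$ is Schwartz; using $S(A_b^*\xi)=S(\xi)$ one gets
$$
\sum_{j\in\Bbb Z}\hat\varphi(A_{b^j}^*\xi)\hat\eta(A_{b^j}^*\xi)
=\sum_{j}\frac{|\hat\varphi(A_{b^j}^*\xi)|^2\zeta(A_{b^j}^*\xi)}{S(A_{b^j}^*\xi)}=1
$$
as required.

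To turn the pointwise non-degeneracy \eqref{nondege1} into the uniform positivity required of $S$, I will use compactness of the $\rho^*$-unit sphere $\Sigma=\{\rho^*=1\}$. For each $\xi_0\in\Sigma$ pick $t(\xi_0)>0$ with $\hat\varphi(A_{t(\xi_0)}^*\xi_0)\ne 0$. By joint continuity of $(t,\xi)\mapsto\hat\varphi(A_t^*\xi)$ there exist an open neighborhood $V(\xi_0)\ni\xi_0$, a factor $\lambda(\xi_0)>1$, and $\delta(\xi_0)>0$ with $|\hat\varphi(A_t^*\xi)|\ge\delta(\xi_0)$ whenever $\xi\in V(\xi_0)$ and $t\in(t(\xi_0)/\lambda(\xi_0),\,t(\xi_0)\lambda(\xi_0))$. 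A finite subcover $V_1,\dots,V_N$ of $\Sigma$ gives corresponding parameters $t_i,\lambda_i,\delta_i$, a uniform $\delta=\min_i\delta_i>0$, and an ambient interval $[T_1,T_2]$ containing every $(t_i/\lambda_i,t_i\lambda_i)$.

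Then I set $b_0=\max_i\lambda_i^{-2}\in(0,1)$ and fix $r_1,r_2$ with $0<r_1<T_1<T_2<r_2$, all independent of $b\in[b_0,1)$. For any such $b$ the multiplicative length $\lambda_i^2$ of $(t_i/\lambda_i,t_i\lambda_i)$ is at least $1/b$, so this interval contains some power $b^{j_i(b)}$. I choose $\zeta$ as above with the additional property $\zeta\ge c_0>0$ on $\{T_1\le\rho^*\le T_2\}$. The sum defining $S$ is locally finite on $\Bbb R^n\setminus\{0\}$ (for each $\xi$ only those $j$ with $b^j\rho^*(\xi)\in(r_1,r_2)$ contribute), hence $S\in C^\infty(\Bbb R^n\setminus\{0\})$. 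For $\xi\in\Sigma\cap V_i$ the single index $j=j_i(b)$ already yields $|\hat\varphi(A_{b^{j_i}}^*\xi)|^2\zeta(A_{b^{j_i}}^*\xi)\ge\delta^2 c_0$, giving $S\ge\delta^2 c_0$ on $\Sigma$, and hence on all of $\Bbb R^n\setminus\{0\}$ by $A_b^*$-invariance.

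The main obstacle will be this simultaneous calibration: producing a single choice of $b_0$, $r_1$, $r_2$, and $\zeta$ that works uniformly for all $b\in[b_0,1)$. The decisive inequality is the multiplicative gap condition $b\ge\lambda_i^{-2}$, which forces some $b^{j_i}$ inside each non-degeneracy window $(t_i/\lambda_i,t_i\lambda_i)$; it is exactly this condition that obliges $b$ to lie in a left neighborhood of $1$ and explains the form of the lemma's conclusion.
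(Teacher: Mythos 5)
Your construction is essentially the one in the paper: a compactness argument on the unit sphere produces finitely many non-degeneracy windows, the condition $b\ge b_0$ (a ratio of window endpoints) forces each geometric progression $\{b^jt\}_{j\in\Bbb Z}$ to hit a window, and $\hat\eta$ is $\overline{\hat\varphi}$ times a $\rho^*$-radial cutoff divided by the $A_{b^k}^*$-periodized sum $S$ (the paper's $\Psi$). One step, however, is not justified as written: from $S\ge\delta^2c_0$ on $\Sigma=\{\rho^*=1\}$ you conclude the same bound on all of $\Bbb R^n\setminus\{0\}$ ``by $A_b^*$-invariance,'' but the orbit of $\Sigma$ under $\{A_{b^k}^*\}_{k\in\Bbb Z}$ is only the countable union $\bigcup_k\{\rho^*(\xi)=b^k\}$, not the whole punctured space, so invariance alone does not propagate the bound. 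The repair is immediate and uses nothing beyond what you already set up: for arbitrary $\xi\ne0$ write $\xi=A_s^*\omega$ with $s=\rho^*(\xi)$ and $\omega\in\Sigma\cap V_i$; the condition $b^js\in(t_i/\lambda_i,\,t_i\lambda_i)$ amounts to $b^j\in\bigl(t_i/(\lambda_i s),\,t_i\lambda_i/s\bigr)$, an interval of the same multiplicative length $\lambda_i^2\ge 1/b$, hence it contains some $b^j$, and that single term already gives $S(\xi)\ge\delta^2c_0$. Equivalently, run your $\Sigma$-argument on the compact fundamental annulus $\{1\le\rho^*\le b^{-1}\}$. This is precisely how the paper's version avoids the issue: its hitting condition ``$b^jt\in I_h$ for some $j$'' is stated for all $t>0$, not just for $t=1$. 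With that one sentence added, your proof is correct and coincides with the paper's.
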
  
When $P$ is the identity matrix, this is in \cite[Chap. V]{ST}; see also 
\cite{CT}.
\begin{proof}[Proof of Lemma $\ref{L2.1}$.]  
Let $S^{n-1}=\{\xi: |\xi|=1\}$. 
Since $\hat{\varphi}$ is continuous, by a compactness 
argument we can find a finite family $\{I_h\}_{h=1}^L$ of compact intervals in 
$(0,\infty)$ such that 
\begin{equation*} 
\inf_{\xi\in S^{n-1}}\max_{1\leq h\leq L}\inf_{t\in I_h}
|\hat{\varphi}(A_t^*\xi)|^2\geq c   
\end{equation*}  
with a positive constant $c$.  
\par 
We observe that there exists $b_0\in (0,1)$ such that if $b\in [b_0,1)$, 
$t>0$ and $1\leq h\leq L$,  then we have $b^jt\in I_h$ for 
some $j\in \Bbb Z$ (the set of integers).  This can be seen by taking $b_0
=\max_{1\leq h\leq L}(a_h/b_h)$, where $I_h=[a_h,b_h]$. 
\par 
Consider an interval $[m,H]$ in $(0, \infty)$ such that 
$\cup_{h=1}^L I_h\subset [m,H]$ and 
choose $\theta\in C_0^\infty(\Bbb R)$ such that $\theta=1$ on $[m,H]$, 
$\supp \theta\subset [m/2,2H]$, $\theta\geq 0$.  Define 
$$\Psi(\xi)=\sum_{j=-\infty}^\infty\theta(b^j\rho^*(\xi))
|\hat{\varphi}(A_{b^j}^*\xi)|^2.  $$ 
Then $\Psi(\xi) \geq c>0$ for $\xi\neq 0$.  
Note that $\Psi(A_{b^k}^*\xi)=\Psi(\xi)$ for $k\in \Bbb Z$.  
Let  
$$\hat{\eta}(\xi)=\theta(\rho^*(\xi))
\overline{\hat{\varphi}(\xi)} 
\Psi(\xi)^{-1}\quad \text{for $\xi\neq 0$} $$ 
and $\hat{\eta}(0)=0$, where $\overline{\hat{\varphi}(\xi)}$ denotes the 
complex conjugate. 
Then, $\eta$ satisfies all the properties stated in the lemma.   
This completes the proof. 
\end{proof}  
\par 
To state our results, we introduce a class of functions. 
\begin{definition} 
Let $\varphi$ be as in Lemma $\ref{L2.1}$.  Then, we say 
$\varphi \in B$ if the following conditions are satisfied: 
\begin{gather} \label{alpha} 
\varphi\in C^1(\Bbb R^n), \quad \partial_{k}\varphi\in L^1(\Bbb R^n), 
\quad 1\leq k\leq n; 
\\ 
\label{beta} 
|\hat{\varphi}(\xi)|\leq C|\xi|^\epsilon \quad \text{for some $\epsilon>0;$} 
\\ 
\label{beta+} 
|\partial_\xi^\alpha \hat{\varphi}(\xi)|\leq 
C_{\alpha,\tau}|\xi|^{-\tau}  
\quad \text{outside a neighborhood of the origin, }
\intertext{for all $\alpha$ and $\tau>0$  with a constant $C_{\alpha,\tau}$, 
where $\alpha=(\alpha_1, \dots, \alpha_n)$ is a multi-index, 
$\alpha_j\in \Bbb Z$, $\alpha_j\geq 0$, $|\alpha|=\alpha_1+\dots+\alpha_n$  
and $\partial_\xi^\alpha=
\partial_{\xi_1}^{\alpha_1}\dots \partial_{\xi_n}^{\alpha_n}$, 
$\partial_{\xi_k}=\partial/\partial_{\xi_k}$.  }  
\notag 
\end{gather}  
\end{definition}  
If $\psi\in \mathscr S(\Bbb R^n)$ with \eqref{cancell} and \eqref{nondege1}, 
then clearly  $\psi\in B$ .  
\par 
Let $\varphi\in B$.  
Then \eqref{alpha} implies that 
\begin{equation}\label{nearorigin} 
\mathscr F(\partial_{k}\varphi)(\xi)=\Xi_k(\xi)\hat{\varphi}(\xi), 
\quad 1\leq k\leq n, 
\end{equation} 
where $\Xi_k(\xi)=2\pi i\xi_k$ and 
$\partial_k=\partial_{x_k}$. 
Let $b\in [b_0,1)$ and $\eta$ be as in Lemma \ref{L2.1}.  
Define 
$$\hat{\zeta}(\xi)=1-\sum_{j\geq 0} \hat{\varphi}(A_{b^j}^*\xi)
\hat{\eta}(A_{b^j}^*\xi).     $$  
Then $\supp(\hat{\zeta})\subset \{\rho^*(\xi)\leq r_2\}$, 
$\hat{\zeta}=1$ 
in $\{\rho^*(\xi)<r_1\}$ and  by \eqref{nearorigin} we have 
\begin{align*} 
\mathscr F(\partial_{k}\varphi)(\xi)&=\sum_{j\geq 0}
\mathscr F(\partial_{k}\varphi)(\xi)\hat{\varphi}(A_{b^j}^*\xi) 
\hat{\eta}(A_{b^j}^*\xi)
+ \hat{\zeta}(\xi)\mathscr F(\partial_{k}\varphi)(\xi)  
\\ 
&=\sum_{j\geq 0} \hat{\varphi}(A_{b^j}^*\xi)
\mathscr F(\alpha^{(b^j)}_{(k)})(A_{b^j}^*\xi) 
+  \hat{\varphi}(\xi) \mathscr F(\beta_{(k)})(\xi), 
\end{align*}
where 
$\alpha^{(b^j)}_{(k)}(x)=(\partial_{k}\varphi)_{b^{-j}}*\eta(x)$ 
and 
$\mathscr F(\beta_{(k)})(\xi)=\hat{\zeta}(\xi)\Xi_k(\xi)$.   
\par 
Thus we have 
\begin{equation}\label{ineq1} 
|F(\partial_{k}\varphi,f)(x,t)|\leq   
\sum_{j\geq 0}|F(\alpha^{(b^j)}_{(k)}*\varphi,f)(x,b^jt)|
+|F(\beta_{(k)}*\varphi,f)(x,t)|,   
\end{equation} 
where $f\in \mathscr S(\Bbb R^n)$ and $F(\psi,f)(x,t)=f*\psi_t(x)$.  
We also write $F(\psi,f)(x,t)= F_\psi(x,t)$ when $f$ is fixed.   
\par 
Let 
\begin{equation}\label{czero} 
C_0(\partial_{k}\varphi,t,L,x)= 
(1+\rho(x))^L\left|\int \mathscr F(\partial_{k}\varphi)(A_{t^{-1}}^*\xi)
\hat{\eta}(\xi)
e^{2\pi i\langle x, \xi\rangle}\, d\xi\right|,  
\end{equation} 
with $t>0, L\geq 0$. 
Then 
\begin{equation*} 
|\alpha^{(b^j)}_{(k)s}(x)|
= C_0(\partial_{k}\varphi,b^j,L,A_s^{-1}x)s^{-\gamma}(1+\rho(x)/s)^{-L}     
\end{equation*}  
for $j\in \Bbb Z$. 
Similarly,    
\begin{equation*} 
|\beta_{(k)s}(x)|=D(\Xi_k,L,A_s^{-1}x)s^{-\gamma}(1+\rho(x)/s)^{-L},     
\end{equation*}  
with 
\begin{equation}\label{d}
D(\Xi_k,L,x)= (1+\rho(x))^L\left|\int \hat{\zeta}(\xi)
\Xi_k(\xi)e^{2\pi i\langle x, \xi\rangle}\, d\xi\right|. 
\end{equation}  
Put $C(\partial_{k}\varphi,j, L,x)=C_0(\partial_{k}\varphi,b^j,L,x)$, 
$j\in \Bbb Z$.    
Define 
\begin{gather}\label{c} 
C(\partial_{k}\varphi,j, L)= 
\int_{\Bbb R^n} C(\partial_{k}\varphi,j, L,x)\, dx, \quad j\in \Bbb Z,    
\\ 
\label{d2} 
D(\Xi_k, L)= \int_{\Bbb R^n} D(\Xi_k, L,x)\, dx.  
\end{gather} 
We can consider  $C(\psi,j, L)$ for other $\psi \in L^1(\Bbb R^n)$ by 
\eqref{czero} and \eqref{c} with $\psi$ in place of $\partial_{k}\varphi$. 
We also write  $C(\psi,j, L)=
C_\varphi(\psi,j, L)$, 
$D(\Xi_k, L)=D_\varphi(\Xi_k, L)$ to indicate that these quantities are 
based on $\varphi$. 
We have $D(\Xi_k, L), C(\partial_{k}\varphi,j, L)<\infty$ for all $j, L$, 
which can be seen from Lemma \ref{L2.8} below. 
\par 
Let 
\begin{equation}\label{pmax}
F^{**}_{N,R}(x)=\sup_{y\in \Bbb R^n}\frac{|F(x-y)|}{(1+R\rho(y))^N}     
\end{equation} 
for a function $F$ on $\Bbb R^n$ and positive real numbers $N, R$. 
We need the following result in proving Theorem \ref{T2.3} below.  
\begin{lemma}\label{L2.2} 
Let $\varphi\in B$,  $b\in [b_0,1)$, $N>0$.   
Then we have 
\begin{multline*}  
\label{ineq3} 
F(\partial_{k}\varphi,f)(\cdot,t)^{**}_{N, t^{-1}}(x) 
\leq C\sum_{j\geq 0}
C(\partial_{k}\varphi, j,N)b^{-jN} 
F(\varphi,f)(\cdot,b^jt)^{**}_{N, (b^jt)^{-1}}(x) 
\\ 
+CD(\Xi_k,N) F(\varphi,f)(\cdot,t)^{**}_{N, t^{-1}}(x)     
\end{multline*} 
for $1\leq k\leq n$, where $f\in \mathscr S(\Bbb R^n)$. 
\end{lemma}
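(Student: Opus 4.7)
The plan is to translate the spectral representation of $\partial_k\varphi$ derived just before the lemma back to physical space and then apply a standard Peetre-type convolution estimate, handling the scale mismatch between $t$ and $b^j t$ by an elementary bound.

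First I would use $(\psi*\chi)_s = \psi_s*\chi_s$ to rewrite the Fourier identity for $\mathscr F(\partial_k\varphi)$ in physical space and convolve with $f$, obtaining
\[
f*(\partial_k\varphi)_t(x-y) = \sum_{j\geq 0} F(\varphi,f)(\cdot, b^j t)*\bigl(\alpha^{(b^j)}_{(k)}\bigr)_{b^j t}(x-y) + F(\varphi,f)(\cdot,t)*(\beta_{(k)})_t(x-y).
\]
Then, for any integrable kernel $K$, any scale $s>0$, and any function $g$, the quasi-triangle inequality $\rho(y+u)\leq \rho(y)+\rho(u)$ and the elementary bound $(1+a+b)^N\leq(1+a)^N(1+b)^N$ together yield the majorization
\[
\frac{|g*K_s(x-y)|}{(1+s^{-1}\rho(y))^N} \leq g^{**}_{N,\,s^{-1}}(x)\int(1+s^{-1}\rho(u))^N|K_s(u)|\, du.
\]
The change of variable $u = A_s v$ turns the right-hand integral into $\int(1+\rho(v))^N|K(v)|\, dv$, which is independent of $s$.

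For $K = \alpha^{(b^j)}_{(k)}$, the identity $\widehat{\alpha^{(b^j)}_{(k)}}(\xi)=\mathscr F(\partial_k\varphi)(A_{b^{-j}}^*\xi)\hat{\eta}(\xi)$ combined with Fourier inversion realizes $|\alpha^{(b^j)}_{(k)}(v)|$ as exactly the modulus of the integral appearing in \eqref{czero}, so the reduced integral is precisely $C(\partial_k\varphi, j, N)$ by \eqref{c}. Analogously, for $K = \beta_{(k)}$ the reduced integral equals $D(\Xi_k, N)$ by \eqref{d} and \eqref{d2}; both quantities are finite by Lemma \ref{L2.8}.

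Finally I would reconcile the scales: the $j$-th summand above is naturally normalized by $(1+(b^jt)^{-1}\rho(y))^N$, whereas the conclusion asks for normalization by $(1+t^{-1}\rho(y))^N$. Since $b^{-j}\geq 1$ for $j\geq 0$, one has $1+b^{-j}t^{-1}\rho(y) \leq b^{-j}(1+t^{-1}\rho(y))$, which introduces precisely the factor $b^{-jN}$ that appears in the statement; the $\beta_{(k)}$-term already lives at scale $t^{-1}$, so no analogous loss occurs there. Taking $\sup_y$ in each estimate and summing over $j\geq 0$ delivers the asserted bound. The only nonroutine point is the scale conversion in the Peetre denominator; the rest is bookkeeping combining the decomposition before the lemma, Fourier inversion, and the change of variable.
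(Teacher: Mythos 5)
Your argument is correct and follows essentially the same route as the paper: both start from the decomposition $\mathscr F(\partial_k\varphi)=\sum_{j\ge0}\hat{\varphi}(A_{b^j}^*\cdot)\mathscr F(\alpha^{(b^j)}_{(k)})(A_{b^j}^*\cdot)+\hat{\varphi}\,\mathscr F(\beta_{(k)})$, use the pointwise kernel bounds that identify the reduced integrals with $C(\partial_k\varphi,j,N)$ and $D(\Xi_k,N)$, and pay the factor $b^{-jN}$ for passing between the scales $t$ and $b^jt$ in the Peetre denominator. The only cosmetic difference is that the paper merges your two-step majorization (convolution estimate at scale $b^jt$, then scale conversion) into a single inequality $\bigl(1+\tfrac{\rho(z-y)}{b^jt}\bigr)^{-N}\bigl(1+\tfrac{\rho(x-z)}{t}\bigr)^{-N}\le C\,b^{-Nj}\bigl(1+\tfrac{\rho(x-y)}{b^jt}\bigr)^{-N}$ before taking the supremum.
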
 
\begin{proof} 
By \eqref{ineq1} we have 
\begin{multline*}  
|F_{\partial_{k}\varphi}(z,t)| 
\\ 
\leq C \sum_{j\geq 0}\int |F_\varphi(y,b^jt)|
\left(1+\frac{\rho(z-y)}{b^jt}\right)^{-N}
C(\partial_{k}\varphi, j, N,A_{b^jt}^{-1}(z-y)) (b^jt)^{-\gamma}\, dy 
\\ 
+C\int |F_\varphi(y,t)|
\left(1+\frac{\rho(z-y)}{t}\right)^{-N}D(\Xi_k, N,A_{t^{-1}}(z-y)) 
t^{-\gamma}\, dy.    
\end{multline*} 
Multiplying both sides of the inequality by $(1+\rho(x-z)/t)^{-N}$ and noting 
that 
\begin{equation*} 
\left(1+\frac{\rho(z-y)}{b^jt}\right)^{-N}
\left(1+\frac{\rho(x-z)}{t}\right)^{-N}\leq C_{A,N}b^{-Nj}
\left(1+\frac{\rho(x-y)}{b^jt}\right)^{-N}
\end{equation*}  
for any $x, y, z\in \Bbb R^n$ and $t>0$ when $b^j\leq A$,  we have 
\begin{align*}  
&|F_{\partial_{k}\varphi}(z,t)|(1+\rho(x-z)/t)^{-N}
\\ 
&\leq C \sum_{j\geq 0}b^{-Nj}\int |F_\varphi(y,b^jt)|
\left(1+\frac{\rho(x-y)}{b^jt}\right)^{-N}
C(\partial_{k}\varphi, j, N,A_{b^jt}^{-1}(z-y))(b^jt)^{-\gamma}\, dy 
\\ 
&\phantom{ \leq\, }+ C\int |F_\varphi(y,t)|
\left(1+\frac{\rho(x-y)}{t}\right)^{-N}D(\Xi_k,N,A_t^{-1}(z-y))
t^{-\gamma}\, dy 
\\ 
&\leq C \sum_{j\geq 0}b^{-Nj}F_\varphi(\cdot,b^jt)^{**}
_{N, (b^jt)^{-1}}(x)\int C(\partial_{k}\varphi, j, N,A_{b^jt}^{-1}(z-y))
(b^jt)^{-\gamma}\, dy 
\\ 
&\phantom{ \leq\, }+C F_\varphi(\cdot,t)^{**}
_{N, t^{-1}}(x)\int D(\Xi_k, N,A_t^{-1}(z-y))t^{-\gamma}\, dy 
\\ 
&\leq C \sum_{j\geq 0}C(\partial_{k}\varphi,j,N)b^{-Nj}
F_\varphi(\cdot,b^jt)^{**}_{N, (b^jt)^{-1}}(x) 
+CD(\Xi_k,N) F_\varphi(\cdot,t)^{**}_{N, t^{-1}}(x). 
\end{align*}
Taking the supremum in $z$ over $\Bbb R^n$, 
we reach the conclusion. 
\end{proof} 
\par 
Let $\varphi\in B$. Then we have 
\begin{equation}\label{ineq5} 
\sup_{j\geq 0}C_\varphi(\nabla\varphi,j,L)b^{-\tau j}<\infty,  
\end{equation} 
for all $L, \tau>0$, 
where we write  
$\nabla\varphi=(\partial_{1}\varphi, \dots , \partial_{n}\varphi)$, 
 $C_\varphi(\nabla\varphi,j,L)=\sum_{k=1}^n 
C_\varphi(\partial_{k}\varphi, j,L)$ and    
\begin{equation}\label{ineq5+} 
D_\varphi(L)<\infty  
\end{equation}  
for all $L>0$, where $D_\varphi(L)=\sum_{k=1}^n D_\varphi(\Xi_k,L)$.   
Let $\psi\in \mathscr S (\Bbb R^n)$.  
Then, we also note that 
\begin{equation}\label{ineq4}
\sup_{j: b^j\leq r_2}C_\varphi(\psi,j,L)b^{-\tau j}<\infty \quad 
\text{for any $L, \tau>0$.} 
\end{equation} 
These results are in Lemma \ref{L2.8} below, which will be used 
in what follows. 
\par   
We consider a ball in $\Bbb R^n$ with center $x$ and radius $t$  
relative to $\rho$ defined by 
$$B(x,t)=\{y\in\Bbb R^n: \rho(x-y)<t\}. $$   
We say that a weight function $w$ belongs to the class $A_p$, 
 $1<p< \infty$, of Muckenhoupt if 
 $$[w]_{A_p}=  
 \sup_B \left(|B|^{-1} \int_B w(x)\,dx\right)\left(|B|^{-1} \int_B
w(x)^{-1/(p-1)}dx\right)^{p-1} < \infty, $$
where the supremum is taken over all balls $B$ in $\Bbb R^n$ and $|B|$ denotes 
the Lebesgue measure of $B$.  Let $A_\infty=\cup_{p>1} A_p$.  
Also, we define the class $A_1$ to be the family of weight functions $w$ 
such that $M(w)\leq Cw$ almost everywhere.  
 We denote by $[w]_{A_1}$ the infimum of all such $C$.  
Here, $M$ is the Hardy-Littlewood maximal operator relative to $\rho$ 
$$M(f)(x)=\sup_{x\in B}|B|^{-1}\int_B|f(y)|\,dy,   $$ 
where the supremum is taken over all balls $B$ in $\Bbb R^n$ containing $x$. 
(See \cite{C, GR}.)   
\par 
For a weight $w$,  we denote by $\|f\|_{p,w}$ the weighted $L^p$ norm 
$$\left(\int_{\Bbb R^n} |f(x)|^p w(x)\, dx\right)^{1/p}. $$
Then we have the following result. 
\begin{theorem}\label{T2.3} 
Let $\varphi \in B$. 
Suppose that $0<p, q<\infty$ and $w \in A_{\infty}$. 
Let $\psi\in \mathscr S(\Bbb R^n)$. 
Suppose that $\hat{\psi}=0$ in a 
neighborhood of  the origin. 
 Then 
$$\left\|\left(\int_0^\infty|f*\psi_t|^q\, \frac{dt}{t}
\right)^{1/q}\right\|_{p,w} 
\leq C\left\|\left(\int_0^\infty|f*\varphi_t|^q\, 
\frac{dt}{t}\right)^{1/q}\right\|_{p,w}   $$  
for $f\in \mathscr S(\Bbb R^n)$ with a positive constant $C$ independent of 
$f$.  
\end{theorem}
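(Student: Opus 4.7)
Plan: The strategy extends the scheme of Lemma~\ref{L2.2} from $\partial_k\varphi$ to a general Schwartz $\psi$ with $\hat\psi$ vanishing near the origin, then combines the resulting pointwise bound with a weighted vector-valued Fefferman--Stein maximal inequality.

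First, replacing $\xi$ by $A_t^*\xi$ in Lemma~\ref{L2.1}(3) yields the reproducing identity $\sum_{j\in\Bbb Z}\hat\varphi(A_{b^jt}^*\xi)\hat\eta(A_{b^jt}^*\xi)=1$, hence
$$f*\psi_t \;=\; \sum_j (f*\varphi_{b^jt})*K_t^{(j)}, \qquad K^{(j)}:=\eta_{b^j}*\psi.$$
Since $\hat\eta$ is supported in the annulus $\{r_1<\rho^*(\xi)<r_2\}$ while $\hat\psi$ vanishes in some ball $\{\rho^*(\xi)<\delta\}$, one has $K^{(j)}\equiv 0$ for all $j$ below some finite $j_0$. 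I then follow the proof of Lemma~\ref{L2.2} verbatim: write $|f*\psi_t(z)|$ as a sum of convolutions, multiply by $(1+\rho(x-z)/t)^{-N}$, apply the inequality
$$\left(1+\tfrac{\rho(z-y)}{b^jt}\right)^{-N}\left(1+\tfrac{\rho(x-z)}{t}\right)^{-N} \leq Cb^{-Nj}\left(1+\tfrac{\rho(x-y)}{b^jt}\right)^{-N}$$
(valid since $b^j\leq b^{j_0}$), and pull out the Peetre maximal function of $f*\varphi_{b^jt}$. The change of variables $y=A_ty'$ followed by $y'=A_{b^j}z$ converts $\int(1+(b^jt)^{-1}\rho(y))^N|K_t^{(j)}(y)|\,dy$ into exactly $C_\varphi(\psi,j,N)$, so taking the supremum in $z$ gives
$$(f*\psi_t)^{**}_{N,t^{-1}}(x)\;\leq\; C\sum_{j\geq j_0} C_\varphi(\psi,j,N)\,b^{-Nj}\,(f*\varphi_{b^jt})^{**}_{N,(b^jt)^{-1}}(x).$$
By \eqref{ineq4} the coefficients $C_\varphi(\psi,j,N)b^{-Nj}$ form a summable sequence in $j$.

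Next I invoke the Peetre-type pointwise estimate
$$(f*\varphi_s)^{**}_{N,s^{-1}}(x)\leq C\bigl(M(|f*\varphi_s|^r)(x)\bigr)^{1/r}, \qquad N>\gamma/r,\;0<r<\infty,$$
which rests on the $C^1$ regularity \eqref{alpha} and the decay \eqref{beta+} of $\varphi\in B$. Substituting, taking the $L^q(dt/t)$ norm at fixed $x$ (via Minkowski when $q\geq 1$, via the elementary $(a+b)^q\leq a^q+b^q$ when $q\leq 1$), and changing variables $s=b^jt$ (which preserves $ds/s$) to absorb the $j$-dependence of the inner integral, I obtain
$$\left(\int_0^\infty|f*\psi_t(x)|^q\,\frac{dt}{t}\right)^{1/q}\leq C\left(\int_0^\infty\bigl(M(|f*\varphi_s|^r)(x)\bigr)^{q/r}\,\frac{ds}{s}\right)^{1/q}.$$
Since $w\in A_\infty$, I can choose $r>0$ so small that $r<\min(p,q)$ and $w\in A_{p/r}$; the weighted vector-valued Fefferman--Stein maximal inequality with exponents $u=q/r$ and $v=p/r$ applied to the family $\{|f*\varphi_s|^r\}_{s>0}$ then yields, after unwinding the $r$-th powers, the desired bound.

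The main technical obstacle is the Peetre-to-Hardy--Littlewood step above: for a general $\varphi\in B$ the Fourier transform $\hat\varphi$ is not band-limited, so the classical Plancherel--P\'olya argument does not apply directly, and one must exploit the gradient integrability \eqref{alpha} together with the preparatory summability estimates \eqref{ineq5}--\eqref{ineq5+} on $C_\varphi(\nabla\varphi,j,L)$ and $D_\varphi(L)$ to produce a decomposition of $f*\varphi_s$ on which a Kolmogorov-type comparison with $M(|f*\varphi_s|^r)^{1/r}$ can be executed.
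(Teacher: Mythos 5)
Your overall architecture is the same as the paper's: decompose $\hat{\psi}$ through the reproducing identity of Lemma \ref{L2.1}, dominate $f*\psi_t$ by a summable combination of Peetre maximal functions of $f*\varphi_{b^jt}$ with coefficients controlled by \eqref{ineq4}, pass to the Hardy--Littlewood maximal function, and finish with the weighted vector-valued Fefferman--Stein inequality. The bookkeeping in the first and last steps is correct (the paper even gets away without the $b^{-Nj}$ factor, since it only needs to bound $|f*\psi_t(x)|$ rather than its full Peetre maximal function). The gap is in the middle step. You invoke the \emph{pointwise} estimate $(f*\varphi_s)^{**}_{N,s^{-1}}(x)\leq C\,M(|f*\varphi_s|^r)(x)^{1/r}$ for $N>\gamma/r$. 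This Plancherel--P\'olya type inequality is a theorem only when $\hat{\varphi}$ is compactly supported (so that $f*\varphi_s$ is band-limited); for a general $\varphi\in B$ it is not available and should not be expected to hold pointwise. You correctly identify this as ``the main technical obstacle,'' but flagging it is not the same as resolving it, and the resolution changes the shape of the statement: what is actually true, and what the paper proves as Lemma \ref{L2.4}, is only the \emph{integrated} version
$$\int_0^\infty F(\varphi,f)(\cdot,t)^{**}_{N,t^{-1}}(x)^q\,\frac{dt}{t}\leq C\int_0^\infty M(|f*\varphi_t|^r)(x)^{q/r}\,\frac{dt}{t},\qquad r=\gamma/N.$$

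The mechanism behind this integrated bound is what your sketch is missing. One first proves a sub-mean-value inequality with a free small parameter (Lemmas \ref{Lpar2} and \ref{Lpar1}):
$$(f*\varphi_t)^{**}_{N,t^{-1}}(x)\leq C\delta^{-N}M(|f*\varphi_t|^r)(x)^{1/r}+C\delta\,|f*(\nabla\varphi)_t|^{**}_{N,t^{-1}}(x),$$
using the $C^1$ regularity \eqref{alpha}; then Lemma \ref{L2.2} converts the gradient term back into a sum over coarser scales $b^jt$ of Peetre maximal functions of $f*\varphi$ itself, with coefficients $C_\varphi(\nabla\varphi,j,N)b^{-jN}$ summable by \eqref{ineq5}. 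Because the $j$-sum shifts the scale, the gradient term can only be absorbed after integrating in $t$ against $dt/t$ (which is invariant under $t\mapsto b^jt$), and the absorption additionally requires the a priori finiteness of $\int_0^\infty F(\varphi,f)(\cdot,t)^{**q}_{N,t^{-1}}(x)\,dt/t$ for $f\in\mathscr S$, which the paper checks from \eqref{beta} and \eqref{beta+}. Your final $L^q(dt/t)$ integration and change of variables $s=b^jt$ are compatible with this integrated lemma, so the proof can be repaired by replacing your pointwise claim with Lemma \ref{L2.4} and supplying its proof; as written, however, the key inequality you rely on is unproved and, for general $\varphi\in B$, false at the pointwise level.
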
  
\par 
To prove Theorem \ref{T2.3} we first show the following.  
\begin{lemma}\label{L2.4}  
Let $0<q<\infty$, $N>0$. 
Suppose that $\varphi\in B$, $f\in \mathscr S(\Bbb R^n)$. Then 
\begin{equation*}\label{ineq10}
\int_0^\infty F(\varphi,f)(\cdot,t)^{**}_{N, t^{-1}}(x)^q \, \frac{dt}{t} \leq 
C\int_0^\infty M(|f*\varphi_t|^r)(x)^{q/r}\, \frac{dt}{t}, \quad r=\gamma/N.   
\end{equation*}  
\end{lemma}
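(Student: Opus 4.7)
The plan is to establish the pointwise inequality
\[
\bigl(F(\varphi,f)(\cdot,t)^{**}_{N,t^{-1}}(x)\bigr)^r\leq C\sigma^{-\gamma}M(|f*\varphi_t|^r)(x)+C\sigma^r\Bigl(\sum_{j\geq 0}a_j F(\varphi,f)(\cdot,b^jt)^{**}_{N,(b^jt)^{-1}}(x)\Bigr)^r
\]
for each $t>0$ and $\sigma\in(0,1)$, with $r=\gamma/N$ and coefficients $a_j$ decaying geometrically in $j$, and then to absorb the scale-shifted sum after integration in $t$. Specifically, raising to the $q/r$-th power, integrating against $dt/t$, and changing variables $s=b^j t$ bounds the right-hand side by
\[
C\sigma^{-qN}\int_0^{\infty}M(|f*\varphi_t|^r)(x)^{q/r}\,\frac{dt}{t}+CK\sigma^q\int_0^{\infty}F(\varphi,f)(\cdot,s)^{**}_{N,s^{-1}}(x)^q\,\frac{ds}{s}
\]
for a finite constant $K$ (obtained by Jensen's inequality when $q\geq 1$, or by subadditivity of $s\mapsto s^q$ when $q\leq 1$, applied to the $\ell^1$-sum, using geometric decay of $a_j$). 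Choosing $\sigma$ so that $CK\sigma^q<1/2$ and invoking the a priori finiteness of the left-hand integral for $f\in\mathscr S$ (ensured by \eqref{cancell} and the Schwartz decay of $f$) permits absorption and yields the lemma.

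To prove the pointwise inequality I fix $t$, set $F(z)=f*\varphi_t(z)$, and for $y\in\Bbb R^n$ and any $z\in B(x-y,\sigma t)$ write $|F(x-y)|\leq|F(z)|+|F(x-y)-F(z)|$. The key non-isotropic mean-value step parametrizes the segment $u(\tau)=z+\tau v$, with $v=x-y-z$, and uses
\[
\nabla F(u)\cdot v=\langle A_{\sigma t}^{*}\nabla F(u),\,A_{\sigma t}^{-1}v\rangle,\qquad |A_{\sigma t}^{-1}v|\leq 1\ \text{since}\ \rho(v)\leq\sigma t.
\]
The group law $A_{t^{-1}}A_{\sigma t}=A_{\sigma}$ gives $A_{\sigma t}^{*}\nabla F(u)=A_{\sigma}^{*}F(\nabla\varphi,f)(u,t)$, and $\|A_{\sigma}^{*}\|=\|A_{\sigma}\|\leq\sigma$ for $\sigma\leq 1$ by (P.6). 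Together with the Peetre-maximal shift $\rho(x-u(\tau))\leq\rho(y)+\sigma t$, this yields
\[
|F(x-y)-F(z)|\leq C\sigma(1+\rho(y)/t)^N\sum_{k=1}^{n}F(\partial_k\varphi,f)(\cdot,t)^{**}_{N,t^{-1}}(x),
\]
and Lemma~\ref{L2.2} combined with \eqref{ineq5}--\eqref{ineq5+} rewrites the $k$-sum on the right as $\sum_{j\geq 0}a_j F(\varphi,f)(\cdot,b^jt)^{**}_{N,(b^jt)^{-1}}(x)$ with geometrically decaying $a_j$ (the $j=0$ coefficient absorbing also the same-scale $D$-term). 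For the $|F(z)|$ contribution, raise to power $r=\gamma/N$, use $\inf_z|F(z)|^r\leq |B(x-y,\sigma t)|^{-1}\int_{B(x-y,\sigma t)}|F(z)|^r\,dz$, and expand $B(x-y,\sigma t)\subset B(x,\sigma t+\rho(y))$ via the quasi-triangle inequality for $\rho$ and the ball-volume identity $|B(\cdot,r)|\asymp r^{\gamma}$, producing $C\sigma^{-\gamma}(1+\rho(y)/t)^{\gamma}M(|F|^r)(x)$. Dividing through by $(1+\rho(y)/t)^{rN}=(1+\rho(y)/t)^{\gamma}$ and taking the supremum over $y$ gives the stated pointwise inequality.

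\textbf{Main obstacle.} The principal technical difficulty lies in the non-isotropic mean-value estimate. The naive Euclidean bound $|F(x-y)-F(z)|\leq|v|\cdot\sup|\nabla F|$ yields an error of order $\sigma\cdot\|A_t^{-1}\|$, which grows like $t^{1-\lambda_{\max}(P)}$ as $t\to 0$ whenever $P\neq I$; such a factor cannot be absorbed uniformly in $t$. The decomposition $\nabla F\cdot v=\langle A_{\sigma t}^{*}\nabla F,\,A_{\sigma t}^{-1}v\rangle$ together with the group law $A_{t^{-1}}A_{\sigma t}=A_{\sigma}$ replaces the product $\|A_{\sigma t}\|\cdot\|A_t^{-1}\|$ by the far smaller $\|A_{\sigma}\|\leq\sigma$, producing an error of genuine order $\sigma$ uniformly in $t$. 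This uniform smallness, together with the rapid decay of $C_\varphi(\partial_k\varphi,j,N)$ supplied by \eqref{ineq5}, is what makes the final integrated absorption valid at the borderline exponent $r=\gamma/N$.
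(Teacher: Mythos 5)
Your argument is correct and is essentially the paper's own proof: the same pointwise estimate $F(\varphi,f)(\cdot,t)^{**}_{N,t^{-1}}(x)\le C\delta^{-N}M(|f*\varphi_t|^r)(x)^{1/r}+C\delta\,|f*(\nabla\varphi)_t|^{**}_{N,t^{-1}}(x)$ (Lemmas \ref{Lpar2} and \ref{Lpar1}), followed by Lemma \ref{L2.2}, integration against $dt/t$, and absorption of the scale-shifted sum using \eqref{ineq5}, \eqref{ineq5+} and the a priori finiteness of the left-hand integral for $f\in\mathscr S(\Bbb R^n)$, $\varphi\in B$. The only difference is organizational: you carry out the non-isotropic mean-value step directly at scale $t$ via $\langle\nabla F,v\rangle=\langle A_\sigma^*(f*(\nabla\varphi)_t),A_{(\sigma t)^{-1}}v\rangle$ and $\|A_\sigma\|\le\sigma$, whereas the paper proves the estimate at scale $1$ and transfers it by conjugation with $T_tf=f\circ A_t$ (properties (T.1)--(T.3)) --- the two computations are identical in content.
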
 
 We apply the next result to show Lemma \ref{L2.4}.   
\begin{lemma}\label{Lpar1}  
Let $N=\gamma/r$, $r>0$ and let $\varphi\in L^1(\Bbb R^n)$, 
$f\in \mathscr S(\Bbb R^n)$. 
Suppose that $\varphi$ satisfies \eqref{alpha}.  
Then  
$$(f*\varphi_t)^{**}_{N,t^{-1}}(x)\leq C\delta^{-N} 
M(|f*\varphi_t|^r)(x)^{1/r} +
C\delta |f*(\nabla \varphi)_t|^{**}_{N,t^{-1}}(x)  $$  
for all $\delta \in (0,1]$ with a constant $C$ independent of $\delta$ 
and $t>0$, where $f*(\nabla\varphi)_t=(f*(\partial_{1}\varphi)_t, \dots , 
f*(\partial_{n}\varphi)_t)$. 
\end{lemma}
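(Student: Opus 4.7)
The plan is a non-isotropic mean-value argument followed by averaging over a $\rho$-ball and an application of the Peetre maximal function. Write $F=f*\varphi_t$ and $H=f*(\nabla\varphi)_t$. Differentiating $\varphi_t(x)=t^{-\gamma}\varphi(A_t^{-1}x)$ and using $\varphi\in C^1$ with $\partial_k\varphi\in L^1$ gives $\partial_k\varphi_t=\sum_l(A_t^{-1})_{lk}(\partial_l\varphi)_t$, so for any $v\in\Bbb R^n$,
\[
\nabla F(\cdot)\cdot v \;=\; H(\cdot)\cdot A_t^{-1}v.
\]
I will exploit this by writing $v=A_tw$, which reduces $\nabla F\cdot v$ to $H\cdot w$; the fundamental theorem of calculus then gives
\[
F(x-y)\;=\;F(x-y+A_tw)-\int_0^1 H(x-y+sA_tw)\cdot w\,ds,
\]
with the gradient error controlled by $|w|$ rather than by $|A_tw|$.

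For $\rho(w)<\delta\le 1$, property (P.3) yields $|w|\le\rho(w)<\delta$. Moreover, $|A_{u^{-1}}(sw)|=s|A_{u^{-1}}w|$ together with the strict monotonicity of $u\mapsto|A_{u^{-1}}w|$ forces $\rho(sw)\le\rho(w)$ for all $s\in[0,1]$, hence $\rho(sA_tw)=t\rho(sw)<t\delta$. So the whole segment $\{x-y+sA_tw:s\in[0,1]\}$ lies in $B(x-y,t\delta)$, and since for $z'=x-\tilde y\in B(x-y,t\delta)$ one has $\rho(\tilde y)\le\rho(y)+t\delta\le\rho(y)+t$, the Peetre bound
\[
|H(z')|\;\le\;2^N H^{**}_{N,t^{-1}}(x)\bigl(1+\rho(y)/t\bigr)^{N}
\]
combines with the mean-value identity to give
\[
|F(x-y)|\;\le\;|F(z)|+C\delta\, H^{**}_{N,t^{-1}}(x)\bigl(1+\rho(y)/t\bigr)^{N}
\]
for every $z\in B(x-y,t\delta)$.

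Next, raising to the $r$th power (with $(a+b)^r\le C_r(a^r+b^r)$) and averaging the resulting pointwise inequality in $z\in B(x-y,t\delta)$ produces the mean of $|F|^r$ over that ball on the right. The inclusion $B(x-y,t\delta)\subset B(x,\rho(y)+t\delta)$ together with $|B(x,s)|=c s^\gamma$ gives
\[
|B(x-y,t\delta)|^{-1}\int_{B(x-y,t\delta)}|F|^r\,dz \;\le\;\delta^{-\gamma}\bigl(1+\rho(y)/t\bigr)^{\gamma}M(|F|^r)(x)
\]
for $\delta\le 1$. Taking $r$th roots with $N=\gamma/r$, dividing by $(1+\rho(y)/t)^N$, and taking the supremum in $y$ produces the claimed inequality.

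The main obstacle is the mismatch between scalar multiplication and the dilation group: since $sw\ne A_sw$ for general $P$, one has to verify carefully that the segment $x-y+sA_tw$ stays in $B(x-y,t\delta)$ for all $s\in[0,1]$ and that the displacement identity $\nabla F\cdot v=H\cdot w$ gives a genuinely linear (not superlinear) dependence on $\delta$ on the gradient side. Both points are forced by the precise choice $v=A_tw$, which aligns the gradient scale with the mean-value displacement; once this alignment is in place, the remainder of the argument is a standard Peetre/maximal-function computation.
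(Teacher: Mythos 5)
Your proof is correct and is essentially the paper's argument: the paper first proves the $t=1$ case (Lemma \ref{Lpar2}) by exactly your mechanism --- mean value theorem plus (P.3) to get the $\delta|\nabla F|$ term, averaging $|F|^r$ over a $\rho$-ball of radius $\delta$ with $|B(x,s)|=cs^\gamma$ to get $\delta^{-\gamma/r}M(|F|^r)^{1/r}$, and the exponent match $N=\gamma/r$ --- and then reduces general $t$ to $t=1$ by conjugating with the scaling operators $(T_tf)(x)=f(A_tx)$, whereas you absorb the dilation directly into the mean-value step via the substitution $v=A_tw$ and the identity $\nabla(f*\varphi_t)\cdot A_tw=(f*(\nabla\varphi)_t)\cdot w$. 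If anything you are slightly more careful than the paper on one point: your verification that $\rho(sw)\le\rho(w)$ for $s\in[0,1]$ (star-shapedness of $\rho$-balls about the center) justifies placing the mean-value intermediate point inside $B(x-y,t\delta)$, a step the paper's proof of Lemma \ref{Lpar2} uses implicitly.
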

To prove Lemma \ref{Lpar1}, we use the following. 
\begin{lemma}\label{Lpar2}
Let $F\in C^1(\Bbb R^n)$ and $R>0$, $N=\gamma/r$, $r>0$. Then  
$$F^{**}_{N,1}(x)\leq C\delta^{-N} M(|F|^r)(x)^{1/r} +C\delta 
|\nabla F|^{**}_{N,1}(x)  $$  
for all $\delta \in (0,1]$ with a constant $C$ independent of $\delta$, 
where $F^{**}_{N,R}$ is as in \eqref{pmax}. 
\end{lemma}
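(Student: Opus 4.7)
The plan is to derive, for each $y\in\Bbb R^n$, the pointwise estimate
\[
|F(x-y)|\leq C\delta^{-N}(1+\rho(y))^N M(|F|^r)(x)^{1/r}+C\delta(1+\rho(y))^N|\nabla F|^{**}_{N,1}(x),
\]
and then divide through by $(1+\rho(y))^N$ and take the supremum in $y$, which produces the claimed inequality (since the definition of $F^{**}_{N,1}$ in \eqref{pmax} is precisely such a supremum).

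Fix $y$ and average over the $\rho$-ball $B(x-y,\delta)$. Because $\delta\leq 1$, properties (P.2) and (P.3) together show that any $w$ with $\rho(w-(x-y))<\delta$ satisfies $|w-(x-y)|<\delta$; the fundamental theorem of calculus along the Euclidean segment from $w$ to $x-y$ then yields
\[
|F(x-y)|\leq |F(w)|+\delta\sup_{\xi:\,|\xi-(x-y)|<\delta}|\nabla F(\xi)|.
\]
Raising to the $r$-th power via $(a+b)^r\leq C_r(a^r+b^r)$ and averaging the first summand over $w\in B(x-y,\delta)$, the containment $B(x-y,\delta)\subset B(x,\rho(y)+\delta)$ combined with the volume identity $|B(z,t)|=ct^\gamma$ (from $\det A_t=t^\gamma$) bounds the resulting mean by
\[
C\Bigl(\frac{\rho(y)+\delta}{\delta}\Bigr)^\gamma M(|F|^r)(x)\leq C\Bigl(\frac{1+\rho(y)}{\delta}\Bigr)^\gamma M(|F|^r)(x).
\]
Taking the $r$-th root and using $N=\gamma/r$ produces the $M$-term in the pointwise estimate.

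For the gradient term, every $\xi$ in the Euclidean $\delta$-neighborhood of $x-y$ has the form $\xi=x-z$ with $|z-y|<\delta\leq 1$; by (P.2) this forces $\rho(z-y)\leq 1$, whence $\rho(z)\leq\rho(y)+1\leq 2(1+\rho(y))$ and
\[
|\nabla F(\xi)|\leq(1+\rho(z))^N|\nabla F|^{**}_{N,1}(x)\leq C(1+\rho(y))^N|\nabla F|^{**}_{N,1}(x).
\]
Multiplying by $\delta$ and assembling with the $M$-term gives the displayed pointwise bound. The argument is a direct adaptation of the classical Peetre-type mean-value technique; the only technical point to monitor is the interplay between Euclidean distance (needed for the fundamental theorem of calculus on the segment) and the $\rho$-distance (needed both for the volume formula and for the Hardy--Littlewood maximal function $M$), and this is handled cleanly by (P.2)--(P.3) under the hypothesis $\delta\leq 1$, so no genuine obstacle arises.
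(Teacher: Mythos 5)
Your proof is correct and follows essentially the same route as the paper: average $|F|^r$ over the $\rho$-ball $B(x-y,\delta)$, control the oscillation by the mean value theorem (using (P.2)--(P.3) to pass between $\rho$ and the Euclidean metric for $\delta\le 1$), bound the average via the containment $B(x-y,\delta)\subset B(x,\rho(y)+\delta)$ and the volume ratio $((\rho(y)+\delta)/\delta)^{\gamma}$, and then divide by $(1+\rho(y))^N$ with $N=\gamma/r$. If anything, your use of the Euclidean $\delta$-neighborhood for the gradient supremum is slightly more careful than the paper's phrasing, since that is the set the segment in the mean value argument actually stays in.
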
 

\begin{proof}
Let 
$\dashint_{B(x,t)} f(y)\,dy=|B(x,t)|^{-1}\int_{B(x,t)} f(y)\,dy$. 
Then, for $\delta\in (0,1], r>0$ and $x, z\in \Bbb R^n$ we write 
\begin{equation*} 
|F(x-z)|
=\left(\dashint_{B(x-z,\delta)}|F(y)+(F(x-z)-F(y))|^r\, dy\right)^{1/r}.  
\end{equation*}
This is bounded by 
\begin{equation*}
 C_r\left(\dashint_{B(x-z,\delta)}|F(y)|^r\, dy\right)^{1/r} 
+C_r\left(\dashint_{B(x-z,\delta)}|F(x-z)-F(y)|^r\, dy\right)^{1/r},  
\end{equation*} 
where $C_r=1$ if $r\geq 1$ and $C_r=2^{-1+1/r}$ if $0<r< 1$.  Thus we have 
\begin{equation*}
 |F(x-z)| \leq C_r\left(\dashint_{B(x-z,\delta)}|F(y)|^r\, dy\right)^{1/r} 
+C_r  \sup_{y:\rho(x-z-y)<\delta}|x-z-y| |\nabla F(y)|.  
\end{equation*} 
If  $|x-z-y|\leq 1$,  $|x-z-y|\leq \rho(x-z-y)$ by (P.3). So, we see that 
\begin{equation}\label{4.1}  
 |F(x-z)| \leq C_r\left(\dashint_{B(x-z,\delta)}|F(y)|^r\, dy\right)^{1/r} 
+C_r  \sup_{y:\rho(x-z-y)<\delta}\delta |\nabla F(y)|.  
\end{equation} 
\par 
If $\rho(x-z-y)<\delta$, $\rho(x-y)<\delta+\rho(z)$.  Therefore 
\begin{align*} 
|\nabla F(y)|&\leq \frac{|\nabla F(x+(y-x))|}{(1+\rho(x-y))^N}
(1+\delta+\rho(z))^N 
\\ 
&\leq |\nabla F|_{N,1}^{**}(x)(1+\delta +\rho(z))^N 
\\ 
&\leq 2^N|\nabla F|_{N,1}^{**}(x)(1 +\rho(z))^N.   
\end{align*} 
Thus 
\begin{equation}\label{4.2} 
\sup_{y: \rho(x-z-y)<\delta} 
\delta|\nabla F(y)|\leq 2^N \delta|\nabla F|_{N,1}^{**}(x)(1 +\rho(z))^N.  
\end{equation} 
\par 
On the other hand, 
\begin{align}\label{4.3} 
&\left(\dashint_{B(x-z,\delta)}|F(y)|^r\, dy\right)^{1/r}
\leq \left(\delta^{-\gamma}(\delta+\rho(z))^\gamma 
\dashint_{B(x,\delta+\rho(z))}|F(y)|^r\, dy\right)^{1/r} 
\\ 
&\leq \delta^{-\gamma/r}(\delta+\rho(z))^{\gamma/r} M(|F|^r)(x)^{1/r}     
\notag 
\\ 
&\leq \delta^{-\gamma/r}(1+\rho(z))^{\gamma/r} M(|F|^r)(x)^{1/r}.  \notag 
\end{align} 
\par 
By \eqref{4.1}, \eqref{4.2} and \eqref{4.3}, we have 
\begin{equation*} 
|F(x-z)|\leq C_r \delta^{-\gamma/r}(1+\rho(z))^{\gamma/r} M(|F|^r)(x)^{1/r} + 
2^N C_r \delta|\nabla F|_{N,1}^{**}(x)(1 +\rho(z))^N.  
\end{equation*} 
Thus, if $N=\gamma/r$, we see that 
\begin{equation*} 
\frac{|F(x-z)|}{(1 +\rho(z))^N}\leq C_r \delta^{-N} M(|F|^r)(x)^{1/r} + 
2^N C_r \delta |\nabla F|_{N,1}^{**}(x).  
\end{equation*} 
Taking the supremum in $z$ over $\Bbb R^n$, we get the desired estimate. 
\end{proof}  

\begin{proof}[Proof of Lemma $\ref{Lpar1}$.]    
Let $(T_tf)(x)=f(A_tx)$. Then we note the following. 
\begin{enumerate} 
\item[(T.1)] 
$(T_tF^{**}_{N,R})(x)=(T_tF)^{**}_{N,tR}(x)$. 
\item[(T.2)] 
$T_t(f*g)(x)=t^\gamma(T_tf)*(T_tg)(x)$.  
\item[(T.3)] 
$T_t(M(f))(x)=M(T_tf)(x)$.    
\end{enumerate}
By (T.1) and (T.2) we have 
\begin{equation*}
T_t((f*\varphi_t)^{**}_{N,t^{-1}})(x)=(T_tf*\varphi)^{**}_{N,1}(x). 
\end{equation*} 
Using Lemma \ref{Lpar2}, we see that 
\begin{equation}\label{epar1} 
(T_tf*\varphi)^{**}_{N,1}(x)\leq C\delta^{-N} M(|T_tf*\varphi|^r)(x)^{1/r} +
C\delta |T_tf*\nabla \varphi|^{**}_{N,1}(x). 
\end{equation}
Applying $T_{t^{-1}}$ to \eqref{epar1}, we have 
\begin{equation}\label{epar2} 
(f*\varphi_t)^{**}_{N,t^{-1}}(x)\leq C\delta^{-N} T_{t^{-1}}
(M(|T_tf*\varphi|^r)(x)^{1/r}) +
C\delta T_{t^{-1}}(|T_tf*\nabla \varphi|^{**}_{N,1})(x). 
\end{equation}
From (T.2) and (T.3), it follows that 
\begin{equation}\label{epar3}
T_{t^{-1}}(M(|T_tf*\varphi|^r)(x)^{1/r})=M(|f*\varphi_t|^r)(x)^{1/r}. 
\end{equation}
Also, (T.1) and (T.2) imply 
\begin{equation}\label{epar4} 
T_{t^{-1}}(|T_tf*\nabla \varphi|^{**}_{N,1})(x)= 
|f*(\nabla \varphi)_t|^{**}_{N,t^{-1}}(x). 
\end{equation}
Using \eqref{epar3} and \eqref{epar4} in \eqref{epar2}, we get the conclusion 
of Lemma \ref{Lpar1}.  

\end{proof}
\begin{proof}[Proof of Lemma $\ref{L2.4}$.]   
 Lemma \ref{Lpar1} implies that  
\begin{equation}\label{ineq7} 
F(\varphi,f)(\cdot,t)^{**}_{N, t^{-1}}(x)\leq C\delta^{-N} 
M(|f*\varphi_t|^r)(x)^{1/r} +
C\delta |f*(\nabla\varphi)_t|^{**}_{N,t^{-1}}(x),    
\end{equation}  
where $r=\gamma/N$. 
Applying Lemma \ref{L2.2}, we have 
\begin{multline*}  
|f*(\nabla\varphi)_t|^{**}_{N,t^{-1}}(x)
\\ 
\leq C\sum_{j\geq 0}
C_\varphi(\nabla\varphi, j,N)b^{-jN} F(\varphi,f)(\cdot,b^jt)^{**}
_{N, (b^jt)^{-1}}(x) 
 +CD_\varphi(N) F(\varphi,f)(\cdot,t)^{**}_{N, t^{-1}}(x). 
\end{multline*}   
Thus by \eqref{ineq7} and H\"{o}lder's inequality when $q>1$ we have  
\begin{multline} \label{ineq8} 
F(\varphi,f)(\cdot,t)^{**}_{N, t^{-1}}(x)^q\leq C\delta^{-Nq} 
M(|f*\varphi_t|^r)(x)^{q/r} 
\\ 
+  C\delta^q\sum_{j\geq 0}
C_\varphi(\nabla\varphi, j,N)^qb^{-jNq}b^{-\tau c_qj} 
 F(\varphi,f)(\cdot,b^jt)^{**}_{N, (b^jt)^{-1}}(x)^q        
\\ 
+C\delta^q D_\varphi(N)^q F(\varphi,f)(\cdot,t)^{**}_{N, t^{-1}}(x)^q. 
\end{multline} 
where $\tau>0$,  $c_q=1$ when $q>1$ and $c_q=0$ when $0<q\leq 1$.  
Integrating both sides of the inequality \eqref{ineq8} over $(0,\infty)$ with 
respect to the measure $dt/t$ and applying termwise integration on the right 
hand side, we see that 
\begin{multline}\label{ineq9}  
\int_0^\infty F(\varphi,f)(\cdot,t)^{**}_{N, t^{-1}}(x)^q \, \frac{dt}{t} 
\leq 
C\delta^{-Nq} \int_0^\infty M(|f*\varphi_t|^r)(x)^{q/r}(x)\, \frac{dt}{t} 
\\ 
+ C\delta^q\left[\sum_{j\geq 0}
C_\varphi(\nabla\varphi, j,N)^qb^{-jNq}b^{-\tau c_qj}
+ D_\varphi(N)^q \right]\int_0^\infty 
F(\varphi,f)(\cdot,t)^{**}_{N, t^{-1}}(x)^q \, \frac{dt}{t}.                   
\end{multline}  
By \eqref{ineq5}  the sum in $j$ on the right hand side 
of \eqref{ineq9} is finite.    
By \eqref{beta} and \eqref{beta+} with $\alpha=0$ we easily see that 
the last integral on the right hand side of \eqref{ineq9} is  finite 
with $f\in \mathscr S(\Bbb R^n)$. Thus, along with \eqref{ineq5+}  
 we see that the second term on the right hand side of \eqref{ineq9} 
 is finite. 
So, choosing $\delta$  sufficiently small, we get the conclusion. 
\end{proof} 
We  need the following version of the vector valued inequality for the 
Hardy-Littlewood maximal operator of Fefferman-Stein \cite{FeS} with 
non-isotropic dilations and weights.  
The proof is essentially similar to the one in \cite{FeS}.  
The case $P=E$ is stated in \cite{Sa3} 
(see \cite{RRT} for related results). 

\begin{lemma}\label{L2.6} 
 Let $1<\mu, \nu <\infty$ and $w\in A_\nu$. Then 
 we have 
$$\left\|\left(\int_0^\infty M(F^t)(x)^{\mu} 
\, \frac{dt}{t}\right)^{1/\mu}\right\|_{\nu,w} 
\leq C \left(\int_{\Bbb R^n} \left(\int_0^\infty |F(x,t) |^{\mu}
\, \frac{dt}{t}\right)^{\nu/\mu} w(x)\, dx\right)^{1/\nu}    $$  
for appropriate functions $F(x,t)$ on $\Bbb R^n\times (0, \infty)$
with $F^t(x)=F(x,t)$.   
\end{lemma}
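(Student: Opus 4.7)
My approach mirrors \cite{FeS}, with the non-isotropic and weighted modifications being essentially notational. The starting point is the scalar Muckenhoupt inequality $\|Mf\|_{L^p(w)}\le C\|f\|_{L^p(w)}$ for $1<p<\infty$ and $w\in A_p$, which transfers to the $\rho$-setting because the balls $B(x,r)$ have volume $|B(x,r)|=cr^\gamma$ and satisfy the quasi-triangle inequality through (P.1); the usual Vitali covering argument and the reverse H\"older property for $A_p$ weights then go through verbatim. When $\mu=\nu$, the lemma follows at once from Fubini and this scalar bound:
\begin{equation*}
\int_{\Bbb R^n}\!\!\int_0^\infty|M(F^t)(x)|^\mu w(x)\,\frac{dt}{t}\,dx \le C\int_0^\infty\!\!\int_{\Bbb R^n}|F(x,t)|^\mu w(x)\,dx\,\frac{dt}{t}.
\end{equation*}

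For the off-diagonal case $\mu\ne\nu$, I would view $F\mapsto(MF^t)_{t>0}$ as a sublinear operator taking values in $L^\mu((0,\infty),dt/t)$ and invoke the Fefferman--Stein strategy. The key endpoint is a vector-valued weighted weak-type $(1,\mu)$ estimate,
\begin{equation*}
u\!\left(\left\{x:\Bigl(\int_0^\infty|M(F^t)(x)|^\mu\tfrac{dt}{t}\Bigr)^{1/\mu}>\lambda\right\}\right)\le \frac{C}{\lambda}\int_{\Bbb R^n}\Bigl(\int_0^\infty|F^t|^\mu\tfrac{dt}{t}\Bigr)^{1/\mu}u\,dx,
\end{equation*}
valid for $u\in A_1$, obtained through a weighted Calder\'on--Zygmund decomposition applied to the scalar function $g(x)=\bigl(\int_0^\infty|F^t(x)|^\mu\,dt/t\bigr)^{1/\mu}$ on the $\rho$-balls. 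Interpolating this with the trivial $L^\infty\to L^\infty$ bound (via Marcinkiewicz in the Stein--Weiss form that accommodates the weight) yields the stated inequality on $L^p(u;L^\mu(dt/t))$ for every $1<p<\infty$. The passage from $A_1$ to an arbitrary $w\in A_\nu$ is then accomplished either by Rubio de Francia's extrapolation theorem, or by the Jones factorization $w=w_0 w_1^{1-\nu}$ with $w_0,w_1\in A_1$ combined with duality in the regime $\mu>\nu$.

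The principal obstacle is the vector-valued weighted weak-type $(1,\mu)$ inequality above. Its proof requires a Calder\'on--Zygmund decomposition tailored to the non-isotropic balls, a delicate estimate of the ``bad'' part exploiting the $L^\mu(dt/t)$-orthogonality at each stopping-time ball, and the reverse H\"older inequality for $A_1$ to control the weight $u$ on these balls. Once this weak-type bound is in place, the remaining interpolation and extrapolation steps are routine transcriptions of the arguments in \cite{FeS,Sa3,RRT} to the present non-isotropic weighted setting.
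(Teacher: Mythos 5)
The paper offers no written proof of Lemma \ref{L2.6}; it only asserts that the argument of \cite{FeS} carries over to the non-isotropic, weighted, continuous-parameter setting. So your overall strategy --- the diagonal case $\mu=\nu$ by Fubini plus the scalar Muckenhoupt bound for the maximal operator relative to $\rho$, then an off-diagonal argument in the Fefferman--Stein style --- is exactly the intended one, and your observations that $|B(x,r)|=cr^{\gamma}$ and that (P.1) makes the $\rho$-balls a space of homogeneous type correctly identify why the transfer is harmless.

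There is, however, one step that fails as written: the interpolation of the vector-valued weak-type $(1,1)$ estimate with ``the trivial $L^\infty\to L^\infty$ bound.'' The operator $F\mapsto (MF^t)_{t>0}$ is \emph{not} bounded from $L^\infty(L^\mu(dt/t))$ to itself for finite $\mu$; already in the unweighted, isotropic, discrete case the $\ell^\mu$-valued maximal inequality is known to fail at $p=\infty$ (take $f_j=\chi_{[2^{-j-1},2^{-j}]}$ for $1\le j\le N$ on $\Bbb R$: the right-hand side is $1$, while at points near the origin one has $Mf_j\ge 1/2$ for every $j$, so the left-hand side is at least $cN^{1/\mu}$). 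Consequently Marcinkiewicz interpolation from the two endpoints you name cannot produce the range $1<p<\infty$. The standard repair, for which you already have all the pieces, is either (a) interpolate the weak $(1,1)$ bound with the \emph{diagonal} strong $(\mu,\mu)$ bound to cover $1<p\le\mu$, and handle $p>\mu$ by duality via the inequality $\int (Mf)^{\mu}g\,dx\le C\int |f|^{\mu}Mg\,dx$, exactly as in \cite{FeS}; or, cleaner still, (b) dispense with the weak-type endpoint entirely: the diagonal case $\nu=\mu$ holds for every $w\in A_{\mu}$ by Fubini and the weighted maximal theorem relative to $\rho$ (see \cite{C}), and Rubio de Francia extrapolation --- valid in this homogeneous-type setting precisely because $M$ is bounded on $L^p(w)$ for $w\in A_p$ --- then yields the inequality for all $1<\nu<\infty$ and $w\in A_{\nu}$ at once. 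With either correction the remaining details (the weighted Calder\'on--Zygmund decomposition on $\rho$-balls, the $A_1$ doubling estimates for the bad part, and the passage from countable sums to the measure $dt/t$) are routine, as you indicate.
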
  
\begin{proof}[Proof of Theorem $\ref{T2.3}$] 
By a change of variables we may assume that $\hat{\psi}=0$ on 
$\{|\xi|\leq1\}=\{\rho^*(\xi)\leq1\}$.  Define 
$$\hat{\zeta}(\xi)=1-\sum_{j: b^j\leq r_2} \hat{\varphi}(A_{b^j}^*\xi)
\hat{\eta}(A_{b^j}^*\xi).     $$  
Then $\supp(\hat{\zeta})\subset \{\rho^*(\xi)\leq 1\}$, 
$\hat{\zeta}=1$ in $\{\rho^*(\xi)<r_1/r_2\}$. Since $\hat{\psi}=0$ 
on $\{|\xi|\leq 1\}$,  we have 
\begin{align*} 
\hat{\psi}(\xi)&=\sum_{j: b^j\leq r_2}
\hat{\psi}(\xi)\hat{\varphi}(A_{b^j}^*\xi) \hat{\eta}(A_{b^j}^*\xi)
\\ 
&=\sum_{j: b^j\leq r_2} \hat{\varphi}(A_{b^j}^*\xi)
\mathscr F(\alpha^{(b^j)})(A_{b^j}^*\xi),  
\end{align*}
where 
$\alpha^{(b^j)}(x)=(\psi)_{b^{-j}}*\eta(x)$.  
\par 
Thus by an easier version of arguments for the proof of Lemma \ref{L2.2} 
we see that 
\begin{equation*} 
|F(\psi,f)(x,t)|
\leq C\sum_{j: b^j\leq r_2}
C(\psi,j,N)F(\varphi,f)(\cdot,b^jt)^{**}_{N, (b^jt)^{-1}}(x),    
\end{equation*}  
from which it follows that 
\begin{equation*} 
|F(\psi,f)(x,t)|^q 
\leq C\sum_{j: b^j\leq r_2} 
C(\psi,j,N)^q b^{-\tau c_qj} 
F(\varphi,f)(\cdot,b^jt)^{**}_{N, (b^jt)^{-1}}(x)^q,    
\end{equation*}  
where $\tau>0$ and $c_q$ is as in \eqref{ineq8}. 
Integrating with the measure $dt/t$ over $(0,\infty)$, we have 
\begin{multline}\label{ineq6}  
\int_0^\infty |F(\psi,f)(x,t)|^q\, \frac{dt}{t} 
\\ 
\leq C\left[\sum_{j: b^j\leq r_2}
C(\psi,j,N)^q b^{-\tau c_qj}\right]  \int_0^\infty 
F(\varphi,f)(\cdot,t)^{**}_{N, t^{-1}}(x)^q \, \frac{dt}{t}.  
\end{multline}  
By \eqref{ineq4} the series on the right hand side of 
\eqref{ineq6}  converges. 
Let $0<p, q<\infty$ and $w \in A_{\infty}$. If $N$ is sufficiently large 
so that $r=\gamma/N<q, p$ and $w\in A_{pN/\gamma}$, 
from \eqref{ineq6} and Lemma \ref{L2.4} it follows  that 
\begin{align}\label{ineq11} 
\left\|\left(\int_0^\infty|f*\psi_t|^q\, \frac{dt}{t}\right)^{1/q}
\right\|_{p,w}
&\leq C\left\|\left(\int_0^\infty M(|f*\varphi_t|^r)(x)^{q/r} 
\, \frac{dt}{t}\right)^{1/q}\right\|_{p,w}
\\  
&= C\left\|\left(\int_0^\infty M(|f*\varphi_t|^r)(x)^{q/r} 
\, \frac{dt}{t}\right)^{r/q}\right\|_{p/r,w}^{1/r}            \notag 
\\ 
&\leq C\left\|\left(\int_0^\infty |f*\varphi_t(x)|^q  
\, \frac{dt}{t}\right)^{1/q}\right\|_{p,w},            \notag 
\end{align}  
where the last inequality follows form Lemma \ref{L2.6}.   
This completes the proof of Theorem \ref{T2.3}.  
\end{proof} 

To conclude this section,  we give a proof of the following results 
used above. 
\begin{lemma}\label{L2.8} 
Let $\varphi\in B$.  Suppose that $\psi\in L^1$, $\hat{\psi}\in 
C^\infty(\Bbb R^n\setminus\{0\})$ and  
we have the estimates \eqref{beta+} with $\hat{\psi}$ in place of 
$\hat{\varphi}$ for all multi-indices $\alpha$ and  all $\tau>0$.  
Let $L, J\geq 0$. 
Then 
\begin{enumerate} 
\item[$(1)$]  
$\sup_{j: b^j\leq J}C_\varphi(\psi,j,L)b^{-j\tau}<\infty$ for any 
$\tau>0$, where $C_\varphi(\psi,j,L)=C(\psi,j,L)$ is as in \eqref{c} with 
$\psi$ in place of $\partial_k\varphi$.    
\item[$(2)$] 
$ D_\varphi(\Xi_k, L)<\infty$,  $1\leq k\leq n$, 
where $\Xi_k(\xi)=2\pi i\xi_k$ as above and    
$D_\varphi(\Xi_k, L)=D(\Xi_k, L)$ is as in \eqref{d2}. 
\end{enumerate}
\end{lemma}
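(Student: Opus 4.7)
The plan is to handle~(2) directly from the structure of $\hat\zeta$, and reduce~(1) to Fourier integration by parts together with the rapid decay of the derivatives of $\hat\psi$ at infinity.

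For~(2), property~(3) of Lemma~\ref{L2.1} allows us to rewrite
$\hat\zeta(\xi)=\sum_{j<0}\hat\varphi(A^*_{b^j}\xi)\hat\eta(A^*_{b^j}\xi)$
on $\Bbb R^n\setminus\{0\}$. Since $\hat\eta$ is supported in $\{r_1<\rho^*(\xi)<r_2\}$, this is a locally finite sum of compactly supported smooth pieces, so $\hat\zeta\in C^\infty(\Bbb R^n\setminus\{0\})$ with support in $\{\rho^*(\xi)\leq r_2\}$; as $\hat\zeta\equiv 1$ on $\{\rho^*(\xi)<r_1\}$, the function extends smoothly across the origin. Hence $\hat\zeta\,\Xi_k\in C^\infty_c(\Bbb R^n)$ and its inverse Fourier transform lies in $\mathscr S(\Bbb R^n)$, which makes $(1+\rho(x))^L$ times it integrable for every $L$.

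For~(1), set $G_j(x)=\mathscr F^{-1}[\hat\psi(A^*_{b^{-j}}\,\cdot)\,\hat\eta](x)$, so that $C_\varphi(\psi,j,L)=\int(1+\rho(x))^L|G_j(x)|\,dx$. Indices $j$ with $b^j\leq J$ and $j$ bounded above form a finite set on which the claim is trivial, so only the regime $j\to+\infty$ needs attention. For such $j$ and $\xi\in\supp\hat\eta$, the continuity of $|\cdot|$ and $\rho^*$ on the compact annulus $\{r_1\leq\rho^*(\xi)\leq r_2\}$ gives $|\xi|\geq c_0>0$, and then property~(P.5) for $A^*_t$ yields $|A^*_{b^{-j}}\xi|\geq b^{-j}|\xi|\geq c_0\,b^{-j}$, so $A^*_{b^{-j}}\xi$ lies far outside any fixed neighborhood of the origin where the hypothesis $|\partial^\beta\hat\psi(\eta)|\leq C_{\beta,\tau'}|\eta|^{-\tau'}$ applies for every $\tau'>0$. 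Integration by parts in $\xi$ then gives, for each multi-index $\alpha$,
\begin{equation*}
(2\pi i x)^\alpha G_j(x)=\int\partial^\alpha_\xi\bigl[\hat\psi(A^*_{b^{-j}}\xi)\hat\eta(\xi)\bigr]\,e^{2\pi i\langle x,\xi\rangle}\,d\xi.
\end{equation*}
Expanding by Leibniz and the chain rule, each resulting summand factors as $(\partial^\beta\hat\psi)(A^*_{b^{-j}}\xi)$ times polynomial factors in the entries of $A^*_{b^{-j}}=(b^{-j})^{P^*}$ (of order at most $b^{-jc|\beta|}$ for some $c>0$ depending only on $P$) times a derivative of $\hat\eta$. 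Bounding $(\partial^\beta\hat\psi)(A^*_{b^{-j}}\xi)$ by $C_{\beta,\tau'}(c_0 b^{-j})^{-\tau'}$ and integrating over the compact set $\supp\hat\eta$ yields $|x^\alpha G_j(x)|\leq C_{\alpha,\tau'}\,b^{j(\tau'-c|\alpha|)}$ uniformly in $x$. Taking the maximum over a fixed family of multi-indices of order at most $N$ gives $|G_j(x)|\leq C_{N,\tau'}\,b^{j(\tau'-cN)}(1+|x|)^{-N}$. By~(P.4), $\rho(x)\leq|x|$ for $|x|\geq 1$; choosing $N$ so large that $(1+|x|)^{-N}(1+\rho(x))^L\in L^1(\Bbb R^n)$ and then $\tau'=\tau+cN$, we obtain $C_\varphi(\psi,j,L)\leq C\,b^{j\tau}$, which is exactly $\sup_{j:b^j\leq J}C_\varphi(\psi,j,L)b^{-j\tau}<\infty$.

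The main technical obstacle is the competition between the polynomial growth in $b^{-j}$ of the $\xi$-derivatives of $\hat\psi(A^*_{b^{-j}}\xi)$ (each derivative costs a factor from the entries of $A^*_{b^{-j}}$) and the decay of $\hat\psi$ at infinity. The rescue is that \eqref{beta+} is an infinite-order decay hypothesis, so $\tau'$ can be chosen arbitrarily large to absorb any fixed polynomial in $b^{-j}$ and still deliver the prescribed decay rate~$b^{j\tau}$.
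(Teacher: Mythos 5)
Your proposal is correct and follows essentially the same route as the paper: both rest on integrating by parts on the Fourier side over the annular support of $\hat\eta$, using the infinite-order decay \eqref{beta+} of $\hat\psi$ at infinity (where $|A^*_{b^{-j}}\xi|\gtrsim b^{-j}$ by (P.5)) to absorb the $b^{-j\kappa|\alpha|}$ loss from differentiating $\hat\psi(A^*_{b^{-j}}\xi)$, together with $(1+\rho(x))\leq C(1+|x|)$ from (P.4); part (2) is in both cases the observation that $\hat\zeta\,\Xi_k\in C^\infty_c$. The only cosmetic difference is that the paper takes just $L+[n/2]+1$ derivatives and finishes with Cauchy--Schwarz against $(1+|x|)^{-[n/2]-1}\in L^2$, whereas you take enough derivatives for a directly integrable pointwise bound.
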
  
\begin{proof} 
To prove part (1), since $1+\rho(x)\leq c(1+|x|)$ by (P.4),  we have  
\begin{multline*} 
(1+|x|)^{[n/2]+1}C_0(\psi,t,L,x) 
\\ 
\leq C\left|\int \hat{\psi}(A_{t^{-1}}^*\xi)\hat{\eta}(\xi)e^{2\pi i
\langle x, \xi\rangle}\, d\xi\right|+
C\sup_{|\alpha|=L+[n/2]+1} 
\left|\int \partial_\xi^\alpha\left[\hat{\psi}(A_{t^{-1}}^*\xi)
\hat{\eta}(\xi)\right]e^{2\pi i\langle x, \xi\rangle}\, d\xi\right|,     
\end{multline*}  
where $C_0(\psi,t,L,x)$ is as in \eqref{czero} with 
$\psi$ in place of $\partial_k\varphi$ and $[a]$ denotes the largest 
integer not exceeding $a$.     We recall that 
$\hat{\eta} \in C^{\infty}(\Bbb R^n)$ with support in 
$\{r_1<\rho^*(\xi)<r_2\}$, which is in Lemma \ref{L2.1}.  
It is known that $\|A_{t^{-1}}^*\|\leq t^{-\kappa}$ for $t\in (0,1]$ with 
some $\kappa\geq 1$. Thus   
 \eqref{beta+} for $\hat{\psi}$ implies 
$$\left|\partial_\xi^\alpha \left[\hat{\psi}(A_{t^{-1}}^*\xi)
\hat{\eta}(\xi)\right]\right|\leq C_{\alpha,M}t^{\tau}, \quad 0<t\leq M, $$  
for any $M>0$, 
if $|\alpha|= L+[n/2]+1$ or $\alpha=0$.  Thus 
\begin{equation*} 
C_0(\psi,t,L,x)\leq C(1+|x|)^{-[n/2]-1}G(x) 
\end{equation*}
with some $G\in L^2$ such that $\|G\|_2\leq Ct^{\tau}$, and hence,  
the Schwarz inequality implies  
\begin{equation}\label{ineq12}
\int_{\Bbb R^n}C_0(\psi,t,L,x)\, dx \leq  Ct^{\tau},   
\end{equation} 
since $[n/2]+1>n/2$.  From \eqref{ineq12} with $t=b^j$ we obtain the 
conclusion of part (1). 
\par 
Similarly, we can see that 
\begin{equation*}\label{ineq13}
\int_{\Bbb R^n}D(\Xi_k,L,x)\, dx <\infty,     
\end{equation*} 
where  $D(\Xi_k,L,x)$ is as in \eqref{d}.  
This completes the proof of part (2).   
\end{proof} 
 
\section{Littlewood-Paley functions and parabolic Hardy spaces}  

As an application of Theorem \ref{T2.3} we have the following. 
\begin{corollary}\label{C3.1} 
Let $0<p\leq 1$. 
Suppose that $\varphi \in B$. 
Then we have 
$$ \|f\|_{H^p}\leq C_p\|g_\varphi(f)\|_p  $$  
for $f\in H^p(\Bbb R^n)\cap \mathscr S(\Bbb R^n)$ with a positive constant 
$C_p$ independent of $f$. 
\end{corollary}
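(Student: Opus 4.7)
The plan is to sandwich the desired inequality between an auxiliary $g_\psi(f)$ for a band-limited $\psi$ and $g_\varphi(f)$, using Theorem \ref{T2.3}.

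First, I would select $\psi\in\mathscr{S}(\Bbb R^n)$ with $\hat\psi$ supported in an annulus $\{a_1\leq\rho^*(\xi)\leq a_2\}$ for some $0<a_1<a_2<\infty$ and satisfying the non-degeneracy condition \eqref{nondege1}. One can, for example, take $\hat\psi(\xi)=\theta(\rho^*(\xi))$ for a suitable $\theta\in C_0^\infty((0,\infty))$, $\theta\geq 0$; such a $\psi$ belongs to $B$ and has $\hat\psi$ vanishing in a neighborhood of the origin.

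Second, I would establish the band-limited bound
$$\|f\|_{H^p}\leq C\|g_\psi(f)\|_p,\qquad 0<p\leq 1,\ f\in H^p\cap\mathscr{S}(\Bbb R^n).$$
This is the non-isotropic analogue of the well-known estimate for band-limited square functions recalled in the introduction. I would prove it by the standard Peetre-maximal-function route adapted to the non-isotropic dilation: from the reproducing formula in Lemma \ref{L2.1} one derives a pointwise majorization of the radial maximal function $f^*(x)=\sup_{t>0}|\Phi_t*f(x)|$ by a rapidly convergent sum of Peetre maximal functions $F(\psi,f)(\cdot,b^jt)^{**}_{N,(b^jt)^{-1}}(x)$; Lemma \ref{L2.4} then converts these into Hardy-Littlewood maximal averages $M(|f*\psi_t|^r)(x)^{1/r}$ with $r=\gamma/N<p$; and the vector-valued maximal inequality of Lemma \ref{L2.6} (applied with $w\equiv 1$, $\mu=2/r$, $\nu=p/r$) yields $\|f^*\|_p\leq C\|g_\psi(f)\|_p$. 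The $H^p$ atomic decomposition enters here to handle density and convergence issues when reducing to $f\in\mathscr{S}(\Bbb R^n)$.

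Third, I would invoke Theorem \ref{T2.3} with $q=2$ and $w\equiv 1\in A_\infty$: since $\hat\psi$ vanishes near the origin,
$$\|g_\psi(f)\|_p\leq C\|g_\varphi(f)\|_p,$$
and chaining with the band-limited bound of step two proves the corollary. The main obstacle is the pointwise Peetre-type domination of $f^*$ in the non-isotropic setting: one must organize the reproducing formula so that the decay of $\hat\Phi$ is absorbed into rapidly decreasing scale coefficients, using properties (P.1)--(P.6) and the scaling behaviour of $A_t$ and $A_t^*$ in place of Euclidean homogeneity. Once that pointwise bound is secured, the remainder is essentially bookkeeping, since Lemmas \ref{L2.4} and \ref{L2.6} supply exactly the ingredients needed to pass from the pointwise majorization to the $L^p$ estimate.
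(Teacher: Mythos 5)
Your overall architecture --- insert a band-limited $\psi$, prove $\|f\|_{H^p}\leq C\|g_\psi(f)\|_p$, then pass to a general $\varphi\in B$ via Theorem \ref{T2.3} with $q=2$, $w\equiv 1$ --- is exactly the paper's, and your first and third steps are fine. The gap is in the mechanism you propose for the middle step. A pointwise majorization of $f^*(x)=\sup_s|\Phi_s*f(x)|$ by a \emph{rapidly convergent} sum of Peetre maximal functions $F(\psi,f)(\cdot,b^jt)^{**}_{N,(b^jt)^{-1}}(x)$ is not available. Writing $\Phi_s*f=\sum_j\Phi_s*\eta_{b^j}*\psi_{b^j}*f$ via Lemma \ref{L2.1}, the coefficient attached to the $j$-th Peetre maximal function is essentially $\int|\Phi_{s/b^j}*\eta(y)|(1+\rho(y))^N\,dy$, since $\Phi_s*\eta_{b^j}=\bigl(\Phi_{s/b^j}*\eta\bigr)_{b^j}$. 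This decays rapidly only for the fine scales $b^j\ll s$, where $\hat{\Phi}(A_s^*\xi)$ is evaluated far from the origin on $\supp\hat{\eta}(A_{b^j}^*\cdot)$. For the coarse scales $b^j\gg s$ one has $\hat{\Phi}(A_s^*\xi)\to\hat{\Phi}(0)=1$ on the relevant annulus, so $\Phi_{s/b^j}*\eta\to\eta$ in the weighted $L^1$ norm and the coefficients do not tend to $0$; the resulting undamped $\ell^1$-in-$j$ sum over infinitely many scales cannot be dominated by the $L^2(dt/t)$ quantity defining $g_\psi(f)$. Thus ``absorbing the decay of $\hat{\Phi}$ into rapidly decreasing scale coefficients'' handles only half the scales, and the half it misses is exactly where the difficulty of the theorem lives.

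The paper circumvents this by never forming that pointwise sum. It introduces the $\mathscr H$-valued Hardy space $H^p_{\mathscr H}$ and splits the estimate into two parts: (a) $\|f\|_{H^p}\leq C\|F\|_{H^p_{\mathscr H}}$ with $F(y,t)=f*\psi_t(y)$ (Lemma \ref{L3.2}), proved by showing the synthesis operator $E^\epsilon_{\widetilde{\bar{\psi}}}$ is bounded from $H^p_{\mathscr H}$ to $H^p$ uniformly in $\epsilon$ --- and this is done through the atomic decomposition of $H^p_{\mathscr H}$ (Lemmas \ref{L3.3} and \ref{L3.4}), where the vanishing moments of atoms supply precisely the low-frequency decay that is missing above; and (b) $\|F\|_{H^p_{\mathscr H}}\leq C\|g_\eta(f)\|_p$ (Lemma \ref{L3.5}), where the Peetre maximal function enters only in the single-scale form $\sup_s|\Phi_s*\psi_t*f(x)|\leq C(f*\eta_t)^{**}_{N,t^{-1}}(x)$, legitimate because $\hat{\psi}(A_t^*\xi)=\hat{\psi}(A_t^*\xi)\hat{\eta}(A_t^*\xi)$; Lemmas \ref{L2.4} and \ref{L2.6} then finish exactly as you intended. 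So the atomic decomposition is not a device for ``density and convergence issues'': it is the engine of the hard direction, replacing the pointwise domination of $f^*$ on which your proposal relies.
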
 
\par 
Let $\mathscr H$ be the Hilbert space of 
functions $u(t)$ on $\Bbb R_+$ such that $\|u\|_{\mathscr H}=
\left(\int_0^\infty|u(t)|^2\, dt/t\right)^{1/2}<\infty$.  
Let $L^q_{\mathscr H}(\Bbb R^n)$ be the Lebesgue space  of functions 
$h(y,t)$ with the norm  
$$\|h\|_{q,\mathscr H}=\left(\int_{\Bbb R^n}\|h^y\|_{\mathscr H}^q \, dy
\right)^{1/q}, $$ 
where $h^y(t)=h(y,t)$. 
\par 
Let $0<p\leq 1$. 
We consider the parabolic Hardy space of functions on $\Bbb R^n$ with values 
in $\mathscr H$, which is denoted by 
$H^p_{\mathscr H}(\Bbb R^n)$.  
Choose $\Phi\in \mathscr S(\Bbb R^n)$ as in the definition of $H^p$ in Section 
1.  Let $h \in L^2_{\mathscr H}(\Bbb R^n)$.  
We say  $h \in H^p_{\mathscr H}(\Bbb R^n)$ if 
$\|h\|_{H^p_{\mathscr H}}= \|h^*\|_{L^p}<\infty$  
with 
$$h^*(x)=\sup_{s>0}\left(\int_0^\infty |\Phi_s*h^{t}(x)|^2\, \frac{dt}{t}
\right)^{1/2}, $$where we write $h^{t}(x)=h(x,t)$. 
\par 
To prove Corollary \ref{C3.1} we need the following. 
\begin{lemma}\label{L3.2} 
Let $\hat{\psi}$ be a function of $\mathscr S(\Bbb R^n)$ with support in 
$\{1\leq \rho^*(\xi)\leq 2\}$.   Suppose that 
$$\int_0^\infty|\hat{\psi}(A_t^*\xi)|^2\, \frac{dt}{t}=1 \quad \text{for all 
$\xi\neq 0$.} 
$$  
Let $F(y,t)=f*\psi_t(y)$ with $f\in H^p(\Bbb R^n)\cap \mathscr S(\Bbb R^n)$, 
 $0<p\leq 1$.  
Then $F\in H^p_{\mathscr H}(\Bbb R^n)$  and
\begin{equation*}\label{}  
\|f\|_{H^p}\leq   C\|F\|_{H^p_{\mathscr H}}.     
\end{equation*} 
\end{lemma}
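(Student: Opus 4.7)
My plan is to combine the Calder\'{o}n reproducing formula associated with $\psi$ with an atomic decomposition of the vector-valued Hardy space $H^p_{\mathscr H}(\Bbb R^n)$. Setting $\tilde\psi(x) = \overline{\psi(-x)}$ so that $\hat{\tilde\psi}(\xi) = \overline{\hat\psi(\xi)}$, the hypotheses yield the reproducing identity
$$f = \int_0^\infty F^t * \tilde\psi_t \, \frac{dt}{t}$$
for $f \in \mathscr S(\Bbb R^n)$, valid by Fourier inversion because for each $\xi \neq 0$ only $t \in [1/\rho^*(\xi), 2/\rho^*(\xi)]$ contributes to the integrand. Note that $\hat\psi$ vanishes identically in a neighborhood of the origin, so $\tilde\psi_t$ has vanishing moments of every order.

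That $F \in H^p_{\mathscr H}$ follows directly from $f \in H^p \cap \mathscr S$: since $f^* \in L^p$, the Schwartz decay of $\psi$ lets one bound $\Phi_s * F^t$ pointwise and conclude $F^* \in L^p$. For the main inequality $\|f\|_{H^p} \leq C\|F\|_{H^p_{\mathscr H}}$, I would invoke an atomic decomposition $F = \sum_j \lambda_j a_j$ with $\sum_j |\lambda_j|^p \lesssim \|F\|^p_{H^p_{\mathscr H}}$, each $a_j$ being an $\mathscr H$-valued $(p,2)$-atom adapted to a ball $B_j = B(x_j, r_j)$: supported in $B_j$, satisfying $\|a_j\|_{L^2_{\mathscr H}} \leq |B_j|^{1/2-1/p}$, and with moment conditions $\int a_j(y, t)\, y^\alpha \, dy = 0$ for all $|\alpha|$ up to a $P$-weighted threshold adapted to $p$ in the sense of \cite{CT}. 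Setting $b_j := \int_0^\infty a_j^t * \tilde\psi_t \, dt/t$, the reproducing formula yields $f = \sum_j \lambda_j b_j$, so it suffices to show that each $b_j$ is a uniformly bounded multiple of an $H^p$-molecule adapted to $B_j$.

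Three standard estimates are required. The $L^2$ bound $\|b_j\|_2 \leq C|B_j|^{1/2-1/p}$ follows from Plancherel and Cauchy--Schwarz in $t$ on the Fourier side, using $\int_0^\infty |\hat\psi(A_t^*\xi)|^2 \, dt/t = 1$. Full cancellation $\int b_j(x)\, x^\alpha \, dx = 0$ for every multi-index $\alpha$ follows from the vanishing moments of $\tilde\psi_t$. The spatial localization, that $b_j$ decays outside $B_j$, is the main technical point: split $b_j(x) = \int_0^\infty (a_j^t * \tilde\psi_t)(x)\, dt/t$ at $t = r_j$, use for $t \leq r_j$ the off-diagonal decay of the Schwartz kernel $\tilde\psi_t(x-y)$ when $\rho(x - x_j)$ is large relative to $r_j$, and use for $t > r_j$ the moment conditions of $a_j$ by replacing $\tilde\psi_t(x - \cdot)$ with its non-isotropic Taylor polynomial about $x_j$ and controlling the remainder via the smoothness of $\tilde\psi$ read at scale $A_t$. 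Combining these with $p \leq 1$ then yields $\|f\|_{H^p}^p \lesssim \sum_j |\lambda_j|^p \lesssim \|F\|^p_{H^p_{\mathscr H}}$.

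The main obstacle is the non-isotropic Taylor expansion in the localization step: derivatives in different coordinate directions scale by different powers of $t$ governed by the eigenstructure of $P$, so the polynomial degrees must be counted with weights adapted to $P$ in the spirit of Calder\'{o}n--Torchinsky \cite{CT}. Once this anisotropic bookkeeping is carried out, the molecule estimates follow by standard arguments.
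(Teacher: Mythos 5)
Your main argument is essentially the paper's proof in different packaging. The paper writes the synthesis operator as $E^\epsilon_{\widetilde{\bar\psi}}(F)=\Psi^{(\epsilon)}*f$ with $\widehat{\Psi^{(\epsilon)}}(\xi)=\int_\epsilon^{\epsilon^{-1}}|\hat\psi(A_t^*\xi)|^2\,dt/t$, proves the uniform bound $\|E^\epsilon_\psi(a)\|_{H^p}\le C$ on $(p,\infty)$ atoms of $H^p_{\mathscr H}$ by exactly the three estimates you list ($L^2$ bound on the doubled ball, moment conditions, and a Taylor-remainder estimate read at scale $A_{t^{-1}}$ with the isotropic degree $M=[\gamma(1/p-1)]$), and then recovers $f$ via $\liminf_{\epsilon\to 0}$. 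Your "atom $\mapsto$ molecule" formulation and your split of the $t$-integral at $t=r_j$ are cosmetic variants of this; your untruncated reproducing formula does require the small extra step of passing the $H^p$ quasi-norm through the limit (Fatou for the maximal functions, as in the paper's $\liminf$), which you should not omit, but for $f\in\mathscr S$ this is routine. The "non-isotropic Taylor expansion" you flag as the main obstacle is handled in the paper with ordinary Taylor polynomials: the anisotropy enters only through the elementary bounds $|A_{s}y|\le s|y|$ for $s\le 1$ and $|A_sy|\le Cs^\kappa|y|$ for $s\ge 1$, applied to $|A_{t^{-1}}y|^{M+1}$, so no weighted degree count is actually needed.

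The one genuine gap is your treatment of the membership $F\in H^p_{\mathscr H}$, which is part of the conclusion and is also needed before you may invoke the atomic decomposition of $F$. It does not "follow directly" from the Schwartz decay of $\psi$ and $f^*\in L^p$: a pointwise bound on $\Phi_s*F^t$ that is uniform in $t$ gives no control of the $L^2(dt/t)$ integral inside the maximal function $F^*(x)=\sup_s\bigl(\int_0^\infty|\Phi_s*\psi_t*f(x)|^2\,dt/t\bigr)^{1/2}$, whose $L^p$ quasi-norm for $p\le 1$ is essentially the nontrivial upper $g$-function estimate $\|g_\psi(f)\|_p\le C\|f\|_{H^p}$. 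The paper proves this membership by decomposing $f$ itself into $H^p(\Bbb R^n)$ atoms and running the same molecule-type estimates as in Lemma 3.3 (this is also how it proves the second inequality of Theorem 1.1). You should replace your one-line claim with that argument, or at least with an appeal to the scalar atomic decomposition of $f$.
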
 
Define 
\begin{equation*}
E_\psi^\epsilon(h)(x)=\int_0^\infty\int_{\Bbb R^n} 
\psi_t(x-y)h_{(\epsilon)}(y,t)\,dy\,\frac{dt}{t}, 
\end{equation*} 
where  $h\in L^2_{\mathscr H}$ and $h_{(\epsilon)}(y,t)=h(y,t)\chi_{(\epsilon, \epsilon^{-1})}(t)$, $0<\epsilon<1$,  and 
we assume that $\psi\in L^1(\Bbb R^n)$ with the cancellation \eqref{cancell}.  
Here $\chi_S$ denotes the characteristic function of a set $S$.  
\par 
We apply the following result in proving Lemma \ref{L3.2}.  
\begin{lemma}\label{L3.3} 
 Suppose that $\psi\in \mathscr S(\Bbb R^n)$ and $\supp \hat{\psi}\subset 
 \{1\leq \rho^*(\xi)\leq 2\}$. 
Then 
$$\sup_{\epsilon\in (0,1)}
\|E_\psi^\epsilon(h)\|_{H^p}\leq C\|h\|_{H^p_{\mathscr H}}, \quad 
0 <p\leq 1.   $$  
\end{lemma}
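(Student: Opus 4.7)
The plan is to combine an atomic decomposition of the vector-valued Hardy space $H^p_{\mathscr H}(\Bbb R^n)$ with a molecular bound in $H^p(\Bbb R^n)$ applied to each atomic piece. First I would invoke the fact that any $h\in H^p_{\mathscr H}\cap L^2_{\mathscr H}$ has a decomposition $h=\sum_k\lambda_k a_k$, convergent in both $L^2_{\mathscr H}$ and $H^p_{\mathscr H}$, such that $\sum_k|\lambda_k|^p\le C\|h\|^p_{H^p_{\mathscr H}}$ and each $a_k$ is an $\mathscr H$-valued $(p,2,K)$-atom associated with some $\rho$-ball $B_k=B(x_k,r_k)$: namely, the support in $y$ lies in $B_k$, $\|a_k\|_{L^2_{\mathscr H}}\le|B_k|^{1/2-1/p}$, and $\int y^\alpha a_k(y,t)\,dy=0$ for almost every $t$ and all $|\alpha|\le K$, with $K$ chosen large depending on $p$ and $\gamma$. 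Combined with the $p$-subadditivity of $\|\cdot\|^p_{H^p}$ and the linearity of $E_\psi^\epsilon$, the problem reduces to proving $\|E_\psi^\epsilon(a)\|_{H^p}\le C$ uniformly in $\epsilon\in(0,1)$ and in the atom $a$.

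The second step is a uniform $L^2$ bound $\|E_\psi^\epsilon(h)\|_{L^2}\le C\|h\|_{L^2_{\mathscr H}}$. Since $\supp\hat\psi\subset\{1\le\rho^*(\xi)\le 2\}$, for each $\xi\ne 0$ the set of $t$ with $\hat\psi(A_t^*\xi)\ne 0$ is a bounded interval in $\log t$ of length independent of $\xi$; Plancherel together with a Cauchy--Schwarz in $t$ then yields
\[
\|E_\psi^\epsilon(h)\|_{L^2}^2\le C\int_{\Bbb R^n}\int_0^\infty|\hat\psi(A_t^*\xi)|^2|\widehat{h^t}(\xi)|^2\,\frac{dt}{t}\,d\xi\le C\|h\|^2_{L^2_{\mathscr H}}.
\]
Applied to an atom this produces $\|E_\psi^\epsilon(a)\|_{L^2}\le C|B|^{1/2-1/p}$, the correct molecular size, and also justifies the termwise convergence used in step one.

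Steps three and four verify that $E_\psi^\epsilon(a)$ has arbitrarily many vanishing moments and the required spatial decay. Since $\hat\psi$ is identically zero near the origin, $\psi$ has moments of all orders equal to zero; Fubini then gives $\int z^\alpha E_\psi^\epsilon(a)(z)\,dz=0$ for every multi-index $\alpha$. For the decay of $E_\psi^\epsilon(a)(x)$ when $\rho(x-x_0)\ge 2r$, I would split the $t$-integral at $t=r$. On $\{t\ge r\}$, I expand $y\mapsto\psi_t(x-y)$ as a Taylor polynomial at $x_0$ of order $K$ and exploit the vanishing moments of $a(\cdot,t)$ to pick up a factor of order $(r/t)^{K+1}$, with the pointwise Schwartz decay of $\psi$ transferred to the anisotropic scale through (P.3)--(P.6) supplying extra decay in $\rho(x-x_0)/t$. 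On $\{t<r\}$, one uses $\rho(x-y)\gtrsim\rho(x-x_0)$ for $y\in B_k$ so that the rapid decay of $\psi_t$ suffices alone. Integrating in $t$ and applying Cauchy--Schwarz against the atomic bound on $\|a^y\|_{\mathscr H}$ yields $|E_\psi^\epsilon(a)(x)|\le C|B|^{-1/p}(1+\rho(x-x_0)/r)^{-M}$ for any prescribed $M$, once $K$ is taken large enough.

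The combination of size, full cancellation, and rapid decay is precisely the molecular condition for $H^p(\Bbb R^n)$ with the dilation group $\{A_t\}$, so the standard molecule-to-$H^p$ estimate delivers $\|E_\psi^\epsilon(a)\|_{H^p}\le C$ uniformly in $\epsilon$ and $a$; summing over $k$ with $p$-subadditivity completes the proof. The hardest step is the decay estimate of step four: the anisotropic Taylor remainder must be matched against the $\rho$-dilation of $\psi_t$, so the gain $(r/t)^{K+1}$ has to be propagated through the dilation group uniformly in the cutoff $\epsilon$, and calibrating $K$ against $p$ via the molecule criterion for non-isotropic $H^p$ is the main technical point.
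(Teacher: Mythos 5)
Your proposal is correct in substance, but it closes the argument by a different route than the paper. Both proofs begin identically: decompose $h$ into $\mathscr H$-valued atoms (the paper's Lemma \ref{L3.4} uses $(p,\infty)$ atoms with $[\gamma(1/p-1)]$ vanishing moments rather than your $(p,2,K)$ atoms, a harmless variant), reduce by $p$-subadditivity to a uniform bound on a single atom, get the $L^2$ bound from $\int_0^\infty|\hat{\psi}(A_t^*\xi)|^2\,dt/t\leq C$, and exploit the moment conditions via a Taylor expansion to obtain decay of order $\rho(x-x_0)^{-(\gamma+M+1)}$ away from the doubled ball. Where you diverge is the last step: you establish that $E_\psi^\epsilon(a)$ is a molecule (size, decay, and --- using that $\hat{\psi}$ vanishes near the origin --- vanishing moments of all orders) and then invoke the molecular characterization of the parabolic $H^p$ spaces. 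The paper instead never leaves the maximal-function definition of $H^p$: it estimates $\sup_{s>0}|\Phi_s*E_\psi^\epsilon(a)(x)|$ directly, applying the Taylor expansion to the convolved kernel $\Phi_{s/t}*\psi$ and using that the family $\{\Phi_u*\psi\}_{u>0}$ is bounded in $\mathscr S(\Bbb R^n)$ to get estimates uniform in $s$; outside $\widetilde{B}$ this gives the pointwise decay of the maximal function, and inside $\widetilde{B}$ the $L^2$ bound plus H\"older finishes. Your route is cleaner in that the cancellation of $E_\psi^\epsilon(a)$ comes for free from the spectral support of $\psi$, but it requires importing the molecule-to-$H^p$ theorem for the anisotropic setting as a black box and calibrating the number of moments $K$ and the decay order $M$ against $\gamma$ and $p$; the paper's route is self-contained modulo the atomic decomposition, needing only the fixed number $M=[\gamma(1/p-1)]$ of moments already built into the atoms and the elementary integral estimate $\int_{\rho(x)\geq 2}\rho(x)^{-p(\gamma+M+1)}\,dx<\infty$. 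Either way the uniformity in $\epsilon$ is automatic since the truncation $h_{(\epsilon)}$ only shrinks the $t$-integrals being estimated.
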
 
Let $a$ be a $(p, \infty)$ atom in  $H^p_{\mathscr H}(\Bbb R^n)$:    
\begin{enumerate} 
\item[(i)] $\left(\int_0^\infty |a(x,t)|^2\, dt/t \right)^{1/2} \leq 
|B|^{-1/p}$, where $B$ is a ball in $\Bbb R^n$ with respect to $\rho$; 
\item[(ii)] $\supp(a(\cdot, t))\subset B$ for all $t>0$, where $B$ is 
 as in $(i);$  
\item[(iii)] $\int_{\Bbb R^n} a(x,t)x^\alpha \, dx=0$ for all $t>0$ and 
$\alpha$ such that $|\alpha|\leq [\gamma(1/p -1)]$, where    
$\alpha=(\alpha_1, \dots, \alpha_n)$ is a multi-index with 
$x^\alpha=x_1^{\alpha_1}\dots x_n^{\alpha_n}$. 
\end{enumerate}  
\par 
To prove Lemma \ref{L3.3} we use the following atomic decomposition.  
\begin{lemma}\label{L3.4}
Let $h\in L^2_{\mathscr H}(\Bbb R^n)$. Suppose that
 $h\in H^p_{\mathscr H}(\Bbb R^n)$. 
Then we can find a sequence $\{a_k\}$ of 
$(p,\infty)$ atoms in $H^p_{\mathscr H}(\Bbb R^n)$ and a sequence 
$\{\lambda_k\}$ of positive numbers such that 
$h=\sum_{k=1}^\infty\lambda_k a_k$ in $H^p_{\mathscr H}(\Bbb R^n)$ and in 
$L^2_{\mathscr H}(\Bbb R^n)$ and such that 
$\sum_{k=1}^\infty\lambda_k^p\leq C\|h\|_{H^p_{\mathscr H}}^p$ with 
a constant $C$ independent of $h$.   
\end{lemma}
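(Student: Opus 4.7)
The plan is to adapt the Fefferman-Stein atomic decomposition for $H^p(\Bbb R^n)$ to the $\mathscr H$-valued, non-isotropic setting of Calder\'on-Torchinsky; the entire construction lives on the base space $\Bbb R^n$ with $\rho$-balls, while the values in $\mathscr H$ enter only linearly, so the classical scheme transfers once one has pointwise control of $\|h(x,\cdot)\|_{\mathscr H}$ by $h^*(x)$.

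First I would introduce the level sets $\Omega_k=\{x\in\Bbb R^n:h^*(x)>2^k\}$, which are open with $|\Omega_k|\leq C2^{-kp}\|h\|_{H^p_{\mathscr H}}^p$. To each $\Omega_k$ I attach a Whitney covering $\{B_{k,j}\}$ of $\rho$-balls with bounded overlap and $\mathrm{dist}(B_{k,j},\Omega_k^c)\approx r_{k,j}$, together with a smooth partition of unity $\{\eta_{k,j}\}$ subordinate to mildly enlarged balls $B^*_{k,j}$; such a decomposition in this geometry is standard (see \cite{CT}). With $d=[\gamma(1/p-1)]$, for each fixed $t>0$ I let $P_{k,j}(\cdot,t)$ be the polynomial of degree at most $d$ such that $(h(\cdot,t)-P_{k,j}(\cdot,t))\eta_{k,j}$ is orthogonal in $L^2(\eta_{k,j}\,dx)$ to every polynomial of degree at most $d$, and set $b_{k,j}(x,t)=(h(x,t)-P_{k,j}(x,t))\eta_{k,j}(x)$ and $g_k=h-\sum_j b_{k,j}$.

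Next I would establish the pointwise bound $\|g_k(x,\cdot)\|_{\mathscr H}\leq C2^k$ and show $g_k\to h$ in $L^2_{\mathscr H}$ as $k\to\infty$, and $g_k\to 0$ in both $L^2_{\mathscr H}$ and $H^p_{\mathscr H}$ as $k\to -\infty$. Telescoping $h=\sum_k (g_{k+1}-g_k)$ and regrouping according to the Whitney cover yields pieces $A_{k,j}$ each supported in a single $B^*_{k,j}$ and vanishing against monomials of degree $\leq d$. Normalizing $a_{k,j}=A_{k,j}/\lambda_{k,j}$ with $\lambda_{k,j}=C2^k|B_{k,j}|^{1/p}$, the atom conditions (i)--(iii) follow, and
\begin{equation*}
\sum_{k,j}\lambda_{k,j}^p\leq C\sum_k 2^{kp}|\Omega_k|\leq C\|h^*\|_{L^p}^p=C\|h\|_{H^p_{\mathscr H}}^p.
\end{equation*}

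The main obstacle is the uniform size control of $g_k$ and of the polynomials $P_{k,j}(\cdot,t)$ in $\mathscr H$-norm. One needs the a.e.\ pointwise inequality $\|h(x,\cdot)\|_{\mathscr H}\leq h^*(x)$, which I would obtain by a Lebesgue differentiation argument for $\mathscr H$-valued $L^2$ functions, using that $\Phi_s*h^t(x)\to h^t(x)$ in $\mathscr H$ for a.e.\ $x$ as $s\to 0$ (an application of Fubini plus the scalar Lebesgue differentiation theorem). Then, for each $t>0$, the polynomial $P_{k,j}(\cdot,t)$ must be bounded on $B^*_{k,j}$ by $C2^k$ in $\mathscr H$-norm; this uses that $B^*_{k,j}$ meets $\Omega_k^c$, together with the equivalence of $L^2(\eta_{k,j}\,dx)$ and $L^\infty(B^*_{k,j})$ norms on the finite-dimensional space of polynomials of degree $\leq d$, applied inside the Bochner integral defining the $\mathscr H$-norm. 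Once this vector-valued size estimate is in hand, the rest of the argument is a routine transcription of the classical real-variable proof, with $L^2$ replaced throughout by $L^2_{\mathscr H}$.
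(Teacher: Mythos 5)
Your outline follows essentially the same route as the paper, which gives no independent proof of Lemma 3.4 but simply cites the Calder\'on/Latter--Uchiyama atomic decomposition for parabolic $H^p$ and asserts that the vector-valued case is treated similarly; your Whitney-cover, polynomial-projection and telescoping scheme over the level sets of $h^*$ is exactly that construction transplanted to $L^2_{\mathscr H}$, and the pointwise bound $\|h(x,\cdot)\|_{\mathscr H}\le h^*(x)$ via vector-valued Lebesgue differentiation is the correct entry point. The one step you under-specify is the estimate $\sup_{B^*_{k,j}}\|P_{k,j}(\cdot,t)\|_{\mathscr H}\le C2^k$: the equivalence of $L^2(\eta_{k,j}\,dx)$ and $L^\infty(B^*_{k,j})$ norms on polynomials does not by itself yield this, because the $L^2(\eta_{k,j}\,dx)$ norm of $h$ over $B^*_{k,j}\subset\Omega_k$ is not controlled by $2^k$; one must instead view the projection coefficients as pairings of $h$ against normalized bump functions adapted to $B^*_{k,j}$ and bound these by the grand maximal function of $h$ at a point of $\Omega_k^c$ meeting $B^*_{k,j}$, which requires first establishing the grand-maximal-function characterization of $H^p_{\mathscr H}$ in the parabolic setting (as in Calder\'on--Torchinsky for the scalar case).
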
   
 We can find in \cite{LU} a proof of the atomic 
decomposition for $H^p(\Bbb R^n)$ (see also \cite{C2}, \cite{GR} and 
\cite{ST}).  
The vector valued case can be treated similarly.     
\begin{proof}[Proof of Lemma $\ref{L3.3}$]     
Let $a$ be a $(p, \infty)$ atom in  $H^p_{\mathscr H}(\Bbb R^n)$ supported 
on the ball $B$ of the definition of the atom.  
We choose a non-negative $C^\infty$ function $\Phi$ on $\Bbb R^n$ supported 
on $\{|x|<1\}$ with $\int \Phi(x)\, dx=1$. 
We prove 
\begin{equation}\label{a3}
\int_{\Bbb R^n} \sup_{s>0}\left|\Phi_s*E_\psi^\epsilon(a)(x)\right|^p
\, dx \leq C. 
\end{equation} 
To show this, 
by applying translation and dilation arguments, we may assume that $B=B(0,1)$. 
Let $\widetilde{B}=B(0,2)$. Then $2\rho(y)\leq \rho(x)$  
if $y\in B$ and $x\in \Bbb R^n\setminus \widetilde{B}$.  
Let $\Psi_{s,t}=\Phi_s*\psi_t$, $s, t>0$. We note that 
$\Phi_s*\psi_t=(\Phi_{s/t}*\psi)_t$ and $\Phi_{u}*\psi$, $u>0$,   belongs to 
a bounded subset of the topological vector space $\mathscr S(\Bbb R^n)$, 
which can easily be seen by noting that $\mathscr F(\Phi_{u}*\psi)(\xi) 
=\hat{\Phi}(A_u^*\xi)\hat{\psi}(\xi)$ and recalling that $\hat{\psi}(\xi)$ 
is supported on $\{1\leq \rho^*(\xi)\leq 2\}$.   
\par 
Let $P_x(y)$ be the Taylor polynomial in $y$ of order $M=[\gamma(1/p -1)]$ 
at $0$ for $\Phi_{s/t}*\psi(x-y)$. 
Then, if $\rho(x)\geq 2\rho(y)$ and we have 
$$|\Phi_{s/t}*\psi(x-y)-P_x(y)|
\leq C|y|^{M+1}(1+\rho(x))^{-L},   $$  
where $L$ is sufficiently large, which will be specified below, 
 and the constant $C$ is independent of $s, t, x, y$. 
This  implies  
$$|\Psi_{s,t}(x-y)-t^{-\gamma}P_{A_{t^{-1}}x}(A_{t^{-1}}y)|\leq 
Ct^{-\gamma}|A_{t^{-1}}y|^{M+1}(1+\rho(x)/t)^{-L}.   $$  
Thus, using   the properties of an atom and the Schwarz 
inequality, for $x\in \Bbb R^n\setminus \widetilde{B}$ we see that 
\begin{align}\label{e3.0} 
&\left|\Phi_s*E_\psi^\epsilon(a)(x)\right|=\left|\iint 
\left(\Psi_{s,t}(x-y)-t^{-\gamma}P_{A_{t^{-1}}x}(A_{t^{-1}}y) \right)
a_{(\epsilon)}(y,t)\, dy\, \frac{dt}{t}\right|  
\\ 
&\leq \int_B\left(\int_0^\infty\left|\Psi_{s,t}(x-y)-t^{-\gamma}
P_{A_{t^{-1}}x}(A_{t^{-1}}y) 
\right|^2 \, \frac{dt}{t}\right)^{1/2}\left(\int_0^\infty|a(y,t)|^2
\, \frac{dt}{t}\right)^{1/2}\, dy       \notag 
\\ 
&\leq C|B|^{-1/p}\int_{B}\left(\int_0^\infty
\left(t^{-\gamma}|A_{t^{-1}}y|^{M+1}(1+\rho(x)/t)^{-L}\right)^2 \, 
\frac{dt}{t}\right)^{1/2}\, dy.    \notag 
\end{align}  
\par 
Now we show that 
\begin{equation}\label{e3.01}
I(x,y):=\int_0^\infty
\left(t^{-\gamma}|A_{t^{-1}}y|^{M+1}(1+\rho(x)/t)^{-L}\right)^2 \, 
\frac{dt}{t} \leq C\rho(x)^{-2(\gamma +M+1)} 
\end{equation} 
for $y\in B$, $x\in \Bbb R^n\setminus \widetilde{B}$, 
 if $L$ is sufficiently large.  We first see that  
\begin{equation}\label{e3.02} 
I(x,y)=\rho(x)^{-2\gamma}\int_{0}^\infty
\left(t^{-\gamma}|A_{(\rho(x)t)^{-1}}y|^{M+1}(1+t^{-1})^{-L}\right)^2 \, 
\frac{dt}{t}. 
\end{equation}  
By (P.6) we have 
\begin{align}\label{e3.03} 
&\int_{\rho(x)^{-1}}^\infty 
\left(t^{-\gamma}|A_{(\rho(x)t)^{-1}}y|^{M+1}(1+t^{-1})^{-L}\right)^2 \, 
\frac{dt}{t}
\\ \notag 
&\leq C|y|^{2(M+1)}\rho(x)^{-2(M+1)}\int_{\rho(x)^{-1}}^\infty 
t^{-2(\gamma+M+1)} (1+t^{-1})^{-2L}\, \frac{dt}{t} 
\\ \notag 
&\leq C|y|^{2(M+1)}\rho(x)^{-2(M+1)}   
\end{align} 
if $L>\gamma +M+1$. If $s\geq 1$, $|A_sy|\leq Cs^\kappa|y|$ for some 
$\kappa\geq 1$.  Thus 
\begin{align}\label{e3.04} 
&\int_0^{\rho(x)^{-1}}
\left(t^{-\gamma}|A_{(\rho(x)t)^{-1}}y|^{M+1}(1+t^{-1})^{-L}\right)^2 \, 
\frac{dt}{t}
\\ \notag 
&\leq C|y|^{2(M+1)}\rho(x)^{-2\kappa(M+1)}\int_0^{\rho(x)^{-1}}  
t^{-2(\gamma+\kappa(M+1))} (1+t^{-1})^{-2L}\, \frac{dt}{t} 
\\ \notag 
&\leq C|y|^{2(M+1)}\rho(x)^{-2\kappa(M+1)},   
\end{align} 
if $L>\gamma +\kappa(M+1)$. By \eqref{e3.02}, \eqref{e3.03} and 
\eqref{e3.04} we obtain \eqref{e3.01}.  
From \eqref{e3.0} and \eqref{e3.01} we have 
\begin{equation*} 
\left|\Phi_s*E_\psi^\epsilon(a)(x)\right| \leq C\rho(x)^{-(\gamma +M+1)}
\end{equation*} 
for $x\in \Bbb R^n\setminus \widetilde{B}$.  
\par 
Since $p>\gamma/(\gamma+M+1)$, it follows that   
\begin{equation}\label{a1}
\int_{\Bbb R^n\setminus \widetilde{B}} \sup_{s>0}
\left|\Phi_s*E_\psi^\epsilon(a)(x)
\right|^p\, dx 
\leq C\int_{\Bbb R^n\setminus \widetilde{B}}
\rho(x)^{-p(\gamma+M+1)} \, dx\leq C  
\end{equation} 
(see \cite{Ca}).  
\par 
Using $\int_0^\infty|\hat{\psi}(A_t^*\xi)|^2\, dt/t\leq C$, by duality 
we can easily  see that 
$$\sup_{\epsilon\in(0,1)}\|E_\psi^\epsilon(h)\|_2 
\leq C\|h\|_{L^2_\mathscr H}, \quad h\in L^2_{\mathscr H}(\Bbb R^n).$$ 
So, by H\"{o}lder's inequality and the properties (i), (ii) of $a$, we get 
\begin{align}\label{a2}
\int_{\widetilde{B}} \sup_{s>0}\left|\Phi_s*E_\psi^\epsilon(a)(x)
\right|^p\, dx 
&\leq C\left(\int_{\widetilde{B}}|M(E_\psi^\epsilon(a))(x)|^2
\, dx \right)^{p/2} 
\\ 
&\leq C\left(\int_{B}\int_0^\infty |a(y,t)|^2\, \frac{dt}{t} \, dy 
\right)^{p/2}    \notag 
\\ 
&\leq C.    \notag
\end{align} 
Combining \eqref{a1} and \eqref{a2}, we have \eqref{a3}. 
By Lemma \ref{L3.4} and \eqref{a3} we can prove 
 \begin{equation*}\label{h1}
\int_{\Bbb R^n} \sup_{s>0}\left|\Phi_s*E_\psi^\epsilon(h)(x)\right|^p
\, dx \leq C\|h\|_{H^p_{\mathscr H}}^p.   
\end{equation*} 
This completes the proof. 
\end{proof}   
  
\begin{proof}[Proof of Lemma $\ref{L3.2}$.]  
By using the atomic decomposition for $H^p(\Bbb R^n)$, 
we can prove the fact that $F\in H^p_{\mathscr H}(\Bbb R^n)$ similarly 
to the proof of Lemma \ref{L3.3}  (see \cite[Lemma 3.6]{U}). 
\par  
We note that 
\begin{equation*} 
E^\epsilon_{\widetilde{\bar{\psi}}}(F)(x)=\int_\epsilon^{\epsilon^{-1}}
\int_{\Bbb R^n} \psi_t*f(y)\bar{\psi}_t(y-x)\,dy\,\frac{dt}{t} 
= \int_{\Bbb R^n} \Psi^{(\epsilon)}(x-z)f(z)\, dz, 
\end{equation*} 
where 
\begin{equation*} 
\Psi^{(\epsilon)}(x)=
\int_\epsilon^{\epsilon^{-1}}\int_{\Bbb R^n}  
\psi_t(x+y)\bar{\psi}_t(y)\,dy\,\frac{dt}{t}. 
\end{equation*} 
We have  
\begin{equation*} 
\widehat{\Psi^{(\epsilon)}}(\xi)
=\int_\epsilon^{\epsilon^{-1}}\hat{\psi}(A_t^*\xi)
\widehat{\bar{\psi}}(-A_t^*\xi)
\,\frac{dt}{t}=\int_\epsilon^{\epsilon^{-1}}|\hat{\psi}(A_t^*\xi)|^2
\,\frac{dt}{t}. 
\end{equation*} 
This and Lemma \ref{L3.3} imply  
\begin{equation*}\label{}  
\|f\|_{H^p}\leq  
 C\liminf_{\epsilon\to 0}\|E^\epsilon_{\widetilde{\bar{\psi}}}(F)\|_{H^p}
\leq C\|F\|_{H^p_{\mathscr H}}.   
\end{equation*} 
\end{proof} 
We also need the following result to prove Corollary \ref{C3.1}.  
\begin{lemma}\label{L3.5} 
Let $\eta\in \mathscr S(\Bbb R^n)$ satisfy $\supp(\hat{\eta})\subset 
\{1/2\leq \rho^*(\xi)\leq 4\}$ and $\hat{\eta}(\xi)=1$ on 
$\{1\leq \rho^*(\xi)\leq 2\}$.  Let $\psi$ be as in Lemma $\ref{L3.2}$.   
Suppose that $\Phi\in \mathscr S(\Bbb R^n)$ satisfies 
$\int_{\Bbb R^n} \Phi(x)\, dx=1$ and $\supp(\Phi)\subset B(0,1)$. 
Then for $p, q>0$ and $f\in  \mathscr S(\Bbb R^n)$ we have 
$$\left\|\left(\int_0^\infty\sup_{s>0}|\Phi_s*\psi_t*f|^q\, \frac{dt}{t}
\right)^{1/q}\right\|_{p} 
\leq C\left\|\left(\int_0^\infty|\eta_t*f|^q\, 
\frac{dt}{t}\right)^{1/q}\right\|_{p}. $$  
\end{lemma}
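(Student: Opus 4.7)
The crucial observation is that because $\hat\psi$ is supported in $\{1\le\rho^*(\xi)\le 2\}$, on which $\hat\eta\equiv 1$, we have $\hat\eta(A_t^*\xi)\hat\psi(A_t^*\xi) = \hat\psi(A_t^*\xi)$; equivalently $\psi_t = \psi_t*\eta_t$, whence
\[
\Phi_s * \psi_t * f \;=\; (\Phi_s * \psi_t)*(\eta_t * f).
\]
The plan is to dominate the left side pointwise by $C(\eta_t*f)^{**}_{N,t^{-1}}(x)$ uniformly in $s>0$, and then apply Lemma~\ref{L2.4} followed by Lemma~\ref{L2.6}, mirroring the concluding step of the proof of Theorem~\ref{T2.3}.

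For the pointwise bound, the standard Peetre-type estimate yields
\[
|(\Phi_s*\psi_t)*(\eta_t*f)(x)| \le (\eta_t*f)^{**}_{N,t^{-1}}(x)\int|(\Phi_s*\psi_t)(y)|(1+t^{-1}\rho(y))^N\,dy.
\]
Writing $\Phi_s*\psi_t = (\Phi_{s/t}*\psi)_t$ and making the substitution $y\mapsto A_t y$, the integral equals $\int|(\Phi_{s/t}*\psi)(y)|(1+\rho(y))^N\,dy$. As already used in the proof of Lemma~\ref{L3.3}, the family $\{\Phi_u*\psi : u>0\}$ is a bounded subset of $\mathscr S(\Bbb R^n)$, since $\mathscr F(\Phi_u*\psi)(\xi)=\hat\Phi(A_u^*\xi)\hat\psi(\xi)$ is supported on the fixed compact set $\{1\le\rho^*(\xi)\le 2\}$, with $\hat\Phi(A_u^*\xi)$ uniformly bounded there. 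Hence the above integral is bounded uniformly in $s,t>0$, and we obtain
\[
\sup_{s>0}|\Phi_s*\psi_t*f(x)| \le C(\eta_t*f)^{**}_{N,t^{-1}}(x).
\]

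Next I verify that $\eta\in B$: it is Schwartz, so \eqref{alpha} and \eqref{beta+} follow at once from the compact support of $\hat\eta$ away from the origin, while \eqref{beta} is trivial because $\hat\eta$ vanishes near $0$. The non-degeneracy condition \eqref{nondege1} holds because, given $\xi\ne 0$, the choice $t=1/\rho^*(\xi)$ gives $\rho^*(A_t^*\xi)=1$ and hence $\hat\eta(A_t^*\xi)=1$. Thus Lemma~\ref{L2.4} applies with $\varphi=\eta$, yielding, for $r=\gamma/N$,
\[
\int_0^\infty (\eta_t*f)^{**}_{N,t^{-1}}(x)^q\,\frac{dt}{t} \le C\int_0^\infty M(|\eta_t*f|^r)(x)^{q/r}\,\frac{dt}{t}.
\]

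To finish, choose $N$ large enough that $r<\min(p,q)$, so that $\mu:=q/r>1$, $\nu:=p/r>1$, and $w\equiv 1\in A_\nu$. Taking $L^p$-norms of both sides of the pointwise bound, inserting the preceding display, and invoking Lemma~\ref{L2.6} with $F(x,t)=|\eta_t*f(x)|^r$ via the identity $\|g\|_p = \|g^r\|_{p/r}^{1/r}$ — exactly as in \eqref{ineq11} — produces the claimed inequality. The only nontrivial point is the uniform-in-$(s,t)$ bound on the Peetre integral, which reduces cleanly to the $\mathscr S$-boundedness of $\{\Phi_u*\psi\}_{u>0}$ that was already exploited in Lemma~\ref{L3.3}; the rest is a direct reuse of machinery established in Section~2.
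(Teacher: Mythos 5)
Your proof is correct and follows essentially the same route as the paper: the identity $\psi_t=\psi_t*\eta_t$ from the support/equal-to-one conditions, the Peetre-type pointwise bound with the uniform $\mathscr S$-boundedness of $\{\Phi_u*\psi\}_{u>0}$, then Lemma \ref{L2.4} applied to $\eta$ and Lemma \ref{L2.6} as in \eqref{ineq11}. Your explicit verification that $\eta\in B$ (needed to invoke Lemma \ref{L2.4}) is a detail the paper leaves implicit, but otherwise the arguments coincide.
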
 
\begin{proof} 
Since $\hat{\Phi}(A_s^*\xi)\hat{\psi}(A_t^*\xi)=\hat{\Phi}(A_s^*\xi)
\hat{\psi}(A_t^*\xi)\hat{\eta}(A_t^*\xi)$, we have 
\begin{align*} 
|\Phi_s*\psi_t*f(x)|&\leq (f*\eta_t)^{**}_{N, t^{-1}}(x)
\int_{\Bbb R^n}|\Phi_s*\psi_t(w)|(1+t^{-1}\rho(w))^N\, dw 
\\ 
&= (f*\eta_t)^{**}_{N, t^{-1}}(x) 
\int_{\Bbb R^n}|\Phi_{s/t}*\psi(w)|(1+\rho(w))^N\, dw 
\\ 
&\leq C_N (f*\eta_t)^{**}_{N, t^{-1}}(x)  
\end{align*} 
for any $N>0$, 
where $C_N$ is independent of $s, t$. This follows from the observation that 
$\Phi_{s/t}*\psi$, $s, t>0$, belongs to a bounded subset of the 
topological vector space $\mathscr S(\Bbb R^n)$, 
as in the proof of Lemma \ref{L3.3}. 
Thus 
\begin{equation}\label{ineq3.1}  
\left(\int_0^\infty\sup_{s>0}|\Phi_s*\psi_t*f(x)|^q\, \frac{dt}{t}
\right)^{1/q}
\leq C\left(\int_0^\infty|(f*\eta_t)^{**}_{N, t^{-1}}(x)|^q\, 
\frac{dt}{t}\right)^{1/q}.  
\end{equation}
By \eqref{ineq3.1} and Lemma \ref{L2.4} with $\eta$ in place of $\varphi$, 
we have 
\begin{equation*} 
\left(\int_0^\infty\sup_{s>0}|\Phi_s*\psi_t*f(x)|^q\, \frac{dt}{t}
\right)^{1/q}
\leq C\left(\int_0^\infty M(|f*\eta_t|^r)(x)(x)^{q/r}
\, \frac{dt}{t}\right)^{1/q}, 
\end{equation*} 
where $N=\gamma/r$.  This and Lemma \ref{L2.6} prove Lemma \ref{L3.5} as 
in \eqref{ineq11}.   
\end{proof} 
\begin{proof}[Proof of Corollary $\ref{C3.1}$.]  
Let $\eta$ be as in Lemma \ref{L3.5}. 
Applying Lemma \ref{L3.2} and Lemma \ref{L3.5} with $q=2$ and $p\in (0,1]$,
 we see that 
$$\|f\|_{H^p}\leq  C\left\|g_\eta(f)\right\|_p, \quad 
f\in H^p(\Bbb R^n)\cap \mathscr S(\Bbb R^n), $$  
which combined with Theorem \ref{T2.3} with $q=2$, $p\in (0,1]$, 
$w=1$ and with $\eta$ in place of $\psi$ proves Corollary \ref{C3.1}. 

\end{proof} 

\begin{proof}[Proof of Theorem $\ref{T1.1}$] 
Let $\varphi$ be as in Theorem \ref{T1.1}. Let $0<p\leq 1$. 
The inequality 
\begin{equation*} 
\|g_\varphi(f)\|_p\leq C\|f\|_{H^p} 
\end{equation*} 
can be proved similarly to the proof of the statement 
$F\in H^p_{\mathscr H}(\Bbb R^n)$ in Lemma \ref{L3.2} for 
$f\in H^p(\Bbb R^n)\cap \mathscr S(\Bbb R^n)$ by using 
the atomic decomposition 
for $H^p(\Bbb R^n)$.  This and Corollary \ref{C3.1} imply  
\begin{equation*}
c_1\|f\|_{H^p}\leq \|g_{\varphi}(f)\|_p\leq c_2\|f\|_{H^p}  
\end{equation*}
for $f\in H^p(\Bbb R^n)\cap \mathscr S(\Bbb R^n)$,  from which 
the conclusion of Theorem \ref{T1.1} follows by arguments similar to the one 
in \cite[pp. 149--150]{U}, since 
$H^p(\Bbb R^n)\cap \mathscr S(\Bbb R^n)$ is dense in $ H^p(\Bbb R^n)$ 
(see \cite{CT2}).  
\end{proof}
\par 
It is not difficult to see that we have discrete parameter versions of 
Theorems \ref{T1.1} and \ref{T2.3}. 
To conclude this note we remark the following results.  

\begin{theorem}\label{T3.6} 
Let $\varphi$ be as in Theorem $\ref{T1.1}$ and $0<p\leq 1$. 
Then, there exist positive constants $c_1, c_2$ such that   
$$c_1\|f\|_{H^p}\leq \left\|\left(\sum_{j=-\infty}^\infty|f*\varphi_{b^j}|^2
\right)^{1/2}\right\|_p\leq c_2\|f\|_{H^p}  $$  
\end{theorem}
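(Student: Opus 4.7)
The plan is to repeat the whole argument of Sections 2 and 3 with the continuous parameter $t \in (0, \infty)$ carrying $dt/t$ replaced throughout by the discrete parameter $t = b^j$, $j \in \Bbb Z$, carrying counting measure, where $b \in [b_0, 1)$ is as in Lemma \ref{L2.1}; equivalently, $\mathscr H = L^2((0, \infty), dt/t)$ is replaced everywhere by $\ell^2(\Bbb Z)$. Since Lemmas \ref{L2.2}, \ref{Lpar1}, and \ref{Lpar2} are pointwise statements in $t$, they apply at $t = b^j$ verbatim, so the transfer is largely mechanical.

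First I would prove the discrete analogue of Theorem \ref{T2.3}: for $\varphi \in B$, $\psi \in \mathscr S(\Bbb R^n)$ with $\hat\psi$ vanishing in a neighborhood of the origin, $0 < p, q < \infty$ and $w \in A_\infty$,
$$\left\|\left(\sum_{j \in \Bbb Z} |f*\psi_{b^j}|^q\right)^{1/q}\right\|_{p,w} \leq C \left\|\left(\sum_{j \in \Bbb Z} |f*\varphi_{b^j}|^q\right)^{1/q}\right\|_{p,w}.$$
The proof mirrors that of Theorem \ref{T2.3}. The discrete analogue of Lemma \ref{L2.4},
$$\sum_{j \in \Bbb Z} F(\varphi,f)(\cdot,b^j)^{**}_{N,b^{-j}}(x)^q \leq C \sum_{j \in \Bbb Z} M(|f*\varphi_{b^j}|^r)(x)^{q/r}, \quad r = \gamma/N,$$
is obtained by applying Lemmas \ref{Lpar1} and \ref{L2.2} pointwise at $t = b^i$, summing over $i \in \Bbb Z$, and absorbing the tail via a change of summation index together with \eqref{ineq5} and \eqref{ineq5+}. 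The concluding maximal-to-norm passage uses the discrete Fefferman--Stein inequality (Lemma \ref{L2.6} with $\int_0^\infty\cdot\,dt/t$ replaced by $\sum_{j \in \Bbb Z}$), which is proved exactly as Lemma \ref{L2.6}.

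Next I would prove the discrete analogue of Corollary \ref{C3.1}: $\|f\|_{H^p} \leq C\|(\sum_j |f*\varphi_{b^j}|^2)^{1/2}\|_p$ for $f \in H^p \cap \mathscr S$. Introduce the $\ell^2(\Bbb Z)$-valued parabolic Hardy space $H^p_{\ell^2}$ with norm $\|\sup_{s>0}(\sum_j|\Phi_s*h^j|^2)^{1/2}\|_{L^p}$, and prove the discrete analogues of Lemmas \ref{L3.2}, \ref{L3.3}, and \ref{L3.5}. The only preparatory point is the existence of $\psi \in \mathscr S(\Bbb R^n)$ with $\supp\hat\psi \subset \{1 \leq \rho^*(\xi) \leq 2\}$ satisfying the discrete Calder\'on identity $\sum_{j \in \Bbb Z} |\hat\psi(A_{b^j}^*\xi)|^2 = 1$ for $\xi \neq 0$; such a $\psi$ is produced by a minor modification of the construction in Lemma \ref{L2.1}. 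All the estimates in the proofs of Lemmas \ref{L3.3} and \ref{L3.5} then carry over unchanged because they use only the size and cancellation of the atom together with the Schwartz decay of $\Phi_{s/t}*\psi$ evaluated at $t = b^j$.

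For the upper bound $\|(\sum_j |f*\varphi_{b^j}|^2)^{1/2}\|_p \leq c_2 \|f\|_{H^p}$ on $f \in H^p \cap \mathscr S$, I would adapt the atomic decomposition argument used in the proof of Lemma \ref{L3.2} to show $F \in H^p_{\mathscr H}$ (equivalently, the first step in the proof of Theorem \ref{T1.1}); the estimates \eqref{e3.01}--\eqref{e3.04} depend only on $\varphi$ and the atom, not on the measure structure on the $t$-parameter, so they apply identically with $\sum_j$ in place of $\int_0^\infty\cdot\,dt/t$. Density of $H^p \cap \mathscr S$ in $H^p$ then extends both inequalities to all $f \in H^p$. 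The main bookkeeping obstacle is to verify summability of the constants governing the discrete Lemma \ref{L2.4} and Theorem \ref{T2.3}; but this is already guaranteed by the estimates \eqref{ineq5}, \eqref{ineq5+}, and \eqref{ineq4} on $C_\varphi(\cdot,j,N)$ and $D_\varphi(\Xi_k,N)$ established for the continuous case.
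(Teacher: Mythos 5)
Your proposal is correct and coincides with the paper's intent: the paper offers no separate proof of Theorem \ref{T3.6}, remarking only that ``it is not difficult to see that we have discrete parameter versions of Theorems \ref{T1.1} and \ref{T2.3},'' and your proposal is precisely that discretization carried out (replacing $dt/t$ by counting measure on $\{b^j\}_{j\in\Bbb Z}$, with $b$ chosen close enough to $1$ that the dilated supports of $\hat\psi$ cover $\Bbb R^n\setminus\{0\}$ for the discrete Calder\'on identity). The supporting estimates \eqref{ineq5}, \eqref{ineq5+}, \eqref{ineq4} and the pointwise-in-$t$ Lemmas \ref{L2.2}, \ref{Lpar1}, \ref{Lpar2} transfer exactly as you describe.
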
  
for $f\in H^p(\Bbb R^n)$. 

\begin{theorem}\label{T3.7} 
Let  $0<p, q<\infty$ and $w \in A_{\infty}$. 
Suppose that $\varphi$ and $\psi$ fulfill the hypotheses of Theorem  
$\ref{T2.3}$.  Then we have, for $f\in \mathscr S(\Bbb R^n)$, 
$$\left\|\left(\sum_{j=-\infty}^\infty|f*\psi_{b^j}|^q\right)^{1/q}
\right\|_{p,w} 
\leq 
C\left\|\left(\sum_{j=-\infty}^\infty|f*\varphi_{b^j}|^q
\right)^{1/q}\right\|_{p,w}.   
$$
\end{theorem}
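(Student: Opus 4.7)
The plan is to imitate the proof of Theorem \ref{T2.3} step by step, replacing the continuous averaging $\int_0^\infty (\cdot)\, dt/t$ by the discrete sum $\sum_{k\in\mathbb Z}(\cdot)$ evaluated at the dyadic scales $t=b^k$. Since all of the pointwise estimates in Section~2 hold for every individual $t>0$, and in particular for $t=b^k$, only the passage from pointwise inequalities to norms needs to be reworked.

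First, by a change of variables we may assume $\hat\psi=0$ on $\{|\xi|\le 1\}$. Using the partition of unity from Lemma \ref{L2.1} associated with $\varphi$, exactly as in the proof of Theorem \ref{T2.3}, I would write
\[
\hat\psi(\xi)=\sum_{j:\,b^j\le r_2}\hat\varphi(A_{b^j}^*\xi)\,\mathscr F(\alpha^{(b^j)})(A_{b^j}^*\xi),\qquad \alpha^{(b^j)}=(\psi)_{b^{-j}}*\eta,
\]
and derive, at each scale $t=b^k$, the pointwise bound
\[
|F(\psi,f)(x,b^k)|\le C\sum_{j:\,b^j\le r_2}C(\psi,j,N)\,F(\varphi,f)(\cdot,b^{j+k})^{**}_{N,b^{-(j+k)}}(x).
\]
Raising to the $q$-th power (inserting a factor $b^{-\tau c_q j}$ via H\"older when $q>1$, as in \eqref{ineq8}), summing in $k\in\mathbb Z$, exchanging the order of summation and substituting $k'=k+j$ decouples the variables and yields, by \eqref{ineq4},
\[
\sum_{k\in\mathbb Z}|F(\psi,f)(x,b^k)|^q\le C\sum_{k'\in\mathbb Z}F(\varphi,f)(\cdot,b^{k'})^{**}_{N,b^{-k'}}(x)^q.
\]

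Next I would establish a discrete version of Lemma \ref{L2.4}: for $r=\gamma/N$,
\[
\sum_{k\in\mathbb Z}F(\varphi,f)(\cdot,b^k)^{**}_{N,b^{-k}}(x)^q
\le C\sum_{k\in\mathbb Z}M(|f*\varphi_{b^k}|^r)(x)^{q/r}.
\]
The argument is identical to that of Lemma \ref{L2.4}: apply Lemma \ref{Lpar1} and Lemma \ref{L2.2} pointwise at each $t=b^k$, raise to the $q$-th power, sum in $k$, shift $k\mapsto k-j$ in the Lemma \ref{L2.2} contribution, use \eqref{ineq5} and \eqref{ineq5+} together with the $\delta^q$ factor to absorb the resulting self-referential term into the left-hand side. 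The absorption requires an a priori finiteness of the discrete sum, which for $f\in\mathscr S(\Bbb R^n)$ follows from \eqref{beta} (decay of $|f*\varphi_{b^k}|$ as $k\to-\infty$, i.e.\ small scales, exploiting $|\hat\varphi(\xi)|\le C|\xi|^\epsilon$) together with \eqref{beta+} applied to arbitrarily many derivatives (rapid decay at large $k$, i.e.\ large scales). Verifying this a priori finiteness is the only step requiring genuine care; everything else is a transcription of Section~2.

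Finally, I would finish by the same chain of inequalities as in \eqref{ineq11}, using a discrete version of Lemma \ref{L2.6}, namely
\[
\left\|\left(\sum_{k}M(G_k)^{\mu}\right)^{1/\mu}\right\|_{\nu,w}\le C\left\|\left(\sum_{k}|G_k|^{\mu}\right)^{1/\mu}\right\|_{\nu,w}\quad(1<\mu,\nu<\infty,\ w\in A_\nu),
\]
which follows either directly along the Fefferman--Stein scheme of \cite{FeS} (as does Lemma \ref{L2.6}) or by representing the sequence $\{G_k\}$ as a piecewise constant function in the $t$-variable and invoking Lemma \ref{L2.6} itself. Choosing $N$ so large that $r=\gamma/N<\min(p,q)$ and $w\in A_{pN/\gamma}$, applying this discrete vector-valued inequality to $G_k=|f*\varphi_{b^k}|^r$ yields
\[
\Big\|\Big(\sum_{k}|f*\psi_{b^k}|^q\Big)^{1/q}\Big\|_{p,w}
\le C\Big\|\Big(\sum_{k}M(|f*\varphi_{b^k}|^r)^{q/r}\Big)^{1/q}\Big\|_{p,w}
\le C\Big\|\Big(\sum_{k}|f*\varphi_{b^k}|^q\Big)^{1/q}\Big\|_{p,w},
\]
which is the desired estimate. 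The main technical obstacle is the a priori finiteness used to run the absorption in the discrete analogue of Lemma \ref{L2.4}; everything else is a direct transcription of the continuous argument.
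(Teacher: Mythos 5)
Your proposal is correct and is essentially the argument the paper intends: the author gives no separate proof of Theorem \ref{T3.7}, merely remarking that the discrete versions follow from the same reasoning, and your transcription — pointwise estimates at $t=b^k$, shift-invariance of $\sum_k$ under $k\mapsto k+j$ replacing the dilation-invariance of $dt/t$, a discrete Lemma \ref{L2.4} with the same absorption step, and a discrete Fefferman--Stein inequality (obtainable from Lemma \ref{L2.6} by the piecewise-constant-in-$t$ device you mention) — is exactly how that remark is meant to be cashed out. You also correctly isolate the only point needing care, the a priori finiteness of $\sum_k F(\varphi,f)(\cdot,b^k)^{**}_{N,b^{-k}}(x)^q$ for $f\in\mathscr S$, which is handled by the same Fourier-side estimates \eqref{beta} and \eqref{beta+} as in the continuous case.
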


\end{document}